\newtheorem{thm}{Theorem}
\newtheorem{lem}[thm]{Lemma}
\newtheorem{prop}[thm]{Proposition}
\newtheorem*{thma}{Theorem A}
\newtheorem*{thmb}{Theorem B}
\newtheorem*{thmc}{Corollary C}
\newtheorem*{thmd}{Corollary D}
\newtheorem*{thme}{Corollary E}
\newcommand{\C}{{\mathbb C}}
\newcommand{\cn}{{\mathbb C}^n}
\newcommand{\R}{{\mathbb R}}
\newcommand{\bA}{{\mathbb A}}
\newcommand{\bB}{{\mathbb B}}
\begin{document}

\title{Geometric Spectral Theory\\ for Compact Operators}

\author{Isaak Chagouel, Michael Stessin, and Kehe Zhu}
\address{Department of Mathematics and Statistics\\
         State University of New York\\
         Albany, NY 12222, USA}

\email{ichagouel@albany.edu}
\email{mstessin@albany.edu}
\email{kzhu@albany.edu}

\subjclass[2000]{47A13 and 47A10}
\keywords{normal operator, compact operator, projective spectrum, joint point
spectrum, characteristic polynomial, completely reducible polynomial, complete
commutativity.}

\begin{abstract}
For an $n$-tuple $\bA=(A_1,\cdots,A_n)$ of compact operators we define the
joint point spectrum of $\bA$ to be the set
$$\sigma_p(\bA)=\{(z_1,\cdots,z_n)\in\cn:\ker(I+z_1A_1+\cdots+z_nA_n)\not=(0)\}.$$
We prove in several situations that the operators in $\bA$ pairwise commute
if and only if $\sigma_p(\bA)$ consists of countably many, locally finite,
hyperplanes in $\cn$. In particular, we show that if $\bA$ is an $n$-tuple of
$N\times N$ normal matrices, then these matrices pairwise commute if and only
if the polynomial
$$p_{\bA}(z_1,\cdots,z_n)=\det(I+z_1A_1+\cdots+z_nA_n)$$
is completely reducible, namely,
$$p_{\bA}(z_1,\cdots,z_n)=\prod_{k=1}^N(1+a_{k1}z_1+\cdots+a_{kn}z_n)$$
can be factored into the product of linear polynomials.
\end{abstract}

\maketitle

\section{Introduction}

The theory of single operators is by now a very mature subject, with the notion of
spectrum playing a key role in the theory. However, multivariate operator theory is
only in its very early stages of development. There is not even wide agreement about
how ``the joint spectrum'' of an $n$-tuple
$$\bA=(A_1,\cdots,A_n)$$
of bounded linear operators on the same Hilbert space $H$ should be defined.

The Taylor spectrum is probably the most studied generalization of the notion of
spectrum for a single operator to the setting of several operators. The definition of
the Taylor spectrum must rely on the extra assumption that the tuple $\bA$ consists
of mutually commuting operators. See \cite{T}. Another notion of joint spectrum was
introduced and studied by McIntosh and Pride \cite{MP1, MP2}. It was further investigated in
\cite{P1, P2, PS, R2, S}. In general, this definition did not require mutual commutativity.

A more elementary notion of joint spectrum for an $n$-tuple $\bA$ of operators on $H$ was
recently introduced by Yang in \cite{Y} and further studied in \cite{SYZ}. More
specifically, Yang defines $\Sigma(\bA)$ to be the set of points $z=(z_1,\cdots,z_n)\in\cn$
such that the operator $z_1A_1+\cdots+z_nA_n$ is not invertible. It is clear that
if $z\in\Sigma(\bA)$, then $cz\in\Sigma(\bA)$ for any complex constant $c$. Therefore,
it is more appropriate to think of $\Sigma(\bA)$ as a subset of the complex projective
space $\C P_n$. Because of this, Yang called $\Sigma(\bA)$ the projective spectrum of $\bA$.
The definition of $\Sigma(\bA)$ is straighforward and there is no need to make the
assumption that the operators in $\bA$ commute with each other.

It was recently discovered in \cite{GS,OS} that the projective spectrum plays an important
role in certain extremal problems of numerical analysis. For example, Theorem 2 in \cite{OS}
shows that the simpler the geometry of the projective spectrum is, the easier the solution of
the extremal problem is. In particular, if the projective spectrum consists of the union of
hyperplanes, then the solution of the corresponding extremal problem is the easiest and the
most natural. Thus, it is important to understand how the geometry of the projective spectrum
is connected to the mutual behavior of these operators.

The purpose of this paper is to study the relationship between the mutual commutativity
of operators in $\bA$ and properties of the projective spectrum for an $n$-tuple $\bA$ of
compact operators. In general, the projective spectrum can be non-informative. For example, if
all operators in $\bA=(A_1,\cdots,A_n)$ are compact, the projective spectrum coincides with
the whole $\C P_n$. Such a degeneration cannot occur if at least one of the operators is
invertible. In this case the projective spectrum is a proper subset of $\C P_n$. If one of the
operators, say $A_n$, is invertible, we may assume that it is the identity, since
$\Sigma (A_1,\cdots,A_n)=\Sigma(A_n^{-1}A_1,\cdots, A_n^{-1}A_{n-1},I)$.
For this and other reasons (see next section), it makes sense to append the identity operator to
$\bA$. Our main results show that in many situations the commutativity of
operators in $\bA=(A_1,\cdots,A_n)$ is equivalent to a certain linear structure of the
projective spectrum of the expanded tuple $(A_1,\cdots,A_n,I)$.

In view of the remarks above and to state our main results, we will slightly
modify the notion of the projective spectrum. Thus we define $\sigma(\bA)$ to be the set
of points $z=(z_1,\cdots,z_n)\in\cn$ such that the operator $I+z_1A_1+\cdots+z_nA_n$ is not
invertible. Similarly, we define $\sigma_p(\bA)$ to be the set of points $z\in\cn$ such that the
operator $I+z_1A_1+\cdots+z_nA_n$ has a nontrivial kernel. Throughout the paper we assume that
there is at least one operator in $\bA$ that is nonzero. This will ensure that $\sigma(\bA)$ is
non-empty. In the case of compact operators, this will also ensure that $\sigma_p(\bA)$ is
non-empty. We can now state our main results.

\begin{thma}
Suppose $\bA=(A_1,\cdots,A_n)$ is a tuple of compact, self-adjoint operators on a Hilbert
space $H$. Then the operators in $\bA$ pairwise commute if and only if $\sigma_p(\bA)$
consists of countably many, locally finite, complex hyperplanes in $\cn$.
\end{thma}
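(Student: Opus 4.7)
The forward direction is a direct application of the spectral theorem: commuting compact self-adjoint operators admit a simultaneous orthogonal decomposition $H = V_0 \oplus \bigoplus_m E_m$ with $V_0 = \bigcap_i \ker A_i$ and $A_i$ acting as the scalar $\lambda_{m,i}$ on $E_m$. A vector lies in $\ker T(z)$ iff its $V_0$-component vanishes and every nonzero eigencomponent $c_m e_m$ satisfies $1 + \sum_i \lambda_{m,i} z_i = 0$, so $\sigma_p(\bA) = \bigcup_m H_m$ with $H_m = \{z : 1 + \sum_i \lambda_{m,i} z_i = 0\}$, the union over $m$ with $(\lambda_{m,\cdot}) \neq 0$. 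Countability is immediate, and $\|\lambda_{m,\cdot}\| \to 0$ (a consequence of compactness) drives the $H_m$ out to infinity, establishing local finiteness.

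For the converse, write $H_k = \{z : 1 + \sum_i a_{ki} z_i = 0\}$, $A_e := \sum_i e_i A_i$, and $\alpha_k(e) := \sum_i a_{ki} e_i$. My plan is to reduce commutativity to the classical principle that a self-adjoint analytic family whose eigenvalue branches are all affine commutes with its perturbation. First I establish that every $a_{ki}$ is real: for $e \in \R^n$, $A_e$ is compact self-adjoint, so the singular parameters of $I + cA_e$ on the complex line $\{ce : c \in \C\}$ are real (they are the negative reciprocals of the real nonzero eigenvalues of $A_e$). Since these parameters are also exactly $\{-1/\alpha_k(e) : \alpha_k(e) \neq 0\}$, we obtain $\alpha_k(e) \in \R$ for every $e \in \R^n$, hence $a_{ki} \in \R$ upon taking $e = e_i$. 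The same slicing identifies $\sigma(A_e) \setminus \{0\} = \{\alpha_k(e)\}_k$, and for generic $e$ the multiplicity of $\alpha_k(e)$ as an eigenvalue of $A_e$ coincides with the generic kernel dimension $d_k := \dim\ker T(z)$ at a generic point $z \in H_k$.

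Replacing $e$ by $e + tf$ for generic $e, f \in \R^n$, the preceding step identifies the nonzero eigenvalues of the self-adjoint family $A_e + tA_f = A_{e+tf}$ as $\{\alpha_k(e) + t\alpha_k(f)\}_k$ with multiplicities $d_k$; every analytic eigenvalue branch of $t \mapsto A_e + tA_f$ is therefore affine in $t$. I then invoke the following principle: if $H(t) = H_0 + tV$ is a self-adjoint analytic family of compact operators whose eigenvalue branches are all affine in $t$, then $[H_0, V] = 0$. When all eigenvalues of $H(t_0)$ are simple (the generic situation), this follows from Kato's second-order formula $\lambda_j''(t) = 2\sum_{k \neq j} |\langle V\phi_j(t), \phi_k(t)\rangle|^2 / (\lambda_j(t) - \lambda_k(t)) = 0$ via a downward induction: enumerating eigenvalues in decreasing order, at the largest eigenvalue all denominators are positive, forcing every summand to vanish; using the symmetry of the matrix elements $|\langle V\phi_j, \phi_k\rangle|^2$ one then peels off one eigenvalue at a time to conclude $\langle V\phi_j, \phi_k\rangle = 0$ for all $j \neq k$. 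Hence $V$ is diagonal in the eigenbasis of $H(t)$ and $[V, H_0] = [V, H(t)] = 0$; specializing $e = e_i$, $f = e_j$ yields $[A_i, A_j] = 0$.

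The main obstacle will be the persistently degenerate case ($d_k > 1$ for some $k$), where the simple-eigenvalue Kato formula does not apply verbatim. I plan to handle it by working with the rank-$d_k$ analytic spectral projection $P_k(t)$ onto the $(\alpha_k(e) + t\alpha_k(f))$-eigenspace of $A_e + tA_f$: the structure equation $(A_e + tA_f - \alpha_k(e) - t\alpha_k(f)) P_k(t) \equiv 0$, expanded as a power series in $t$, yields $P_k A_f P_k = \alpha_k(f) P_k$ at first order and, at higher orders together with the cross-cluster induction of the previous paragraph, forces the off-diagonal blocks of $A_f$ relative to the $P_k(t)$ decomposition to vanish. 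This recovers full block-diagonality of $A_f$ in the joint-eigenspace decomposition of $A_e$, delivering $[A_e, A_f] = 0$ and, by specialization, the required pairwise commutativity.
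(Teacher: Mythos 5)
Your forward direction coincides with the paper's (simultaneous diagonalization via Lemma~\ref{10}, then read off the hyperplanes). The converse, however, takes a genuinely different route. The paper (Theorem~\ref{11}, then Theorem~\ref{12} for the $n$-tuple) runs a greedy recursion: start from a maximum-modulus eigenvalue of one operator, produce a single common eigenvector via Lemma~\ref{9} (which in turn rests on the \emph{first-order} perturbation formula $\lambda_\varepsilon'(0)=\langle Bv,v\rangle$ of Lemma~\ref{5} applied through Riesz projections, combined with the Cauchy--Schwarz equality trick), split off a one-dimensional reducing piece, check that the residual joint point spectrum is still a locally finite union of lines (a delicate step using Proposition~\ref{8}), and iterate. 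You instead work directly with the $n$-tuple via the real pencil $A_e+tA_f$: you note the coefficients are real, identify the nonzero eigenvalues of $A_{e+tf}$ as the affine functions $\alpha_k(e)+t\alpha_k(f)$, and then use the \emph{second-order} Kato formula together with a sign argument (peel eigenvalues in decreasing order, so the reduced resolvent is positive, so vanishing of $\lambda_j''$ forces the off-diagonal matrix elements of $A_f$ to vanish) to obtain $[A_e,A_f]=0$ in one sweep. This avoids the recursion and its attendant verification that the residual spectrum stays a union of hyperplanes, and it treats general $n$ without the paper's two-at-a-time reduction; the price is that your degenerate-eigenvalue case is only sketched. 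The sketch is essentially right---with $P_kA_fP_k=\alpha_k(f)P_k$ established at first order, the second-order effective operator is $P_kA_f(I-P_k)(\alpha_k(e)-A_e)^{-1}(I-P_k)A_fP_k$, and the same top-down ordering makes the inner reduced resolvent sign-definite on the un-peeled part, so that it vanishes iff $(I-P_k)A_fP_k=0$---but to be a complete proof you would need to write this out and also note explicitly that, once every nonzero eigenspace of $A_e$ is shown $A_f$-invariant, the kernel of $A_e$ is automatically $A_f$-invariant as well (being the orthogonal complement). You should also say a word about why $[A_e,A_f]=0$ for a generic pair of real directions implies it identically (bilinearity and density), after which $e=e_i$, $f=e_j$ gives the claim.
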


Recall from algebra and algebraic geometry that a polynomial is completely reducible if
it can be factored into a product of linear polynomials. A simple example of a polynomial
of two variables that cannot be factored into the product of linear polynomials is
$z^2+w$.

\begin{thmb}
Suppose $\bA=(A_1,\cdots,A_n)$ is a tuple of $N\times N$ normal matrices. Then the
following conditions are equivalent:
\begin{enumerate}
\item[(a)] The matrices in $\bA$ pairwise commute.
\item[(b)] $\sigma_p(\bA)$ is the union of finitely many complex hyperplanes in $\cn$.
\item[(c)] The complex polynomial
$$p(z_1,\cdots,z_N)=\det(I+z_1A_1+\cdots+z_nA_n)$$
is completely reducible.
\end{enumerate}
\end{thmb}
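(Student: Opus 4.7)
The plan is to establish the cycle (a) $\Rightarrow$ (c) $\Rightarrow$ (b) $\Rightarrow$ (a), handling (c) $\Leftrightarrow$ (b) as a single easy step.

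For (a) $\Rightarrow$ (c), the spectral theorem for commuting normal matrices provides a single unitary $U$ simultaneously diagonalizing the tuple, $U^* A_j U = D_j = \operatorname{diag}(\lambda_k^{(j)})$; unitary invariance of the determinant then delivers $p_{\bA}(z) = \prod_{k=1}^N (1 + \sum_{j=1}^n \lambda_k^{(j)} z_j)$. For (c) $\Leftrightarrow$ (b), I would use that in the matrix case $\sigma_p(\bA)$ is exactly the zero locus of $p_{\bA}$: if $p_{\bA}$ is completely reducible, $\sigma_p(\bA)$ is the union of the hyperplanes $\{1 + \sum_j a_{kj} z_j = 0\}$; conversely, if $\sigma_p(\bA)$ is a finite union of hyperplanes, Hilbert's Nullstellensatz shows each defining linear polynomial divides $p_{\bA}$, and the degree bound $\deg p_{\bA} \le N$ forces $p_{\bA}$ (after padding by trivial factors) to be a product of $N$ linear polynomials.

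The main content is (c) $\Rightarrow$ (a). I would first reduce to two matrices at a time: setting $z_\ell = 0$ for $\ell \notin \{i,j\}$ in the factored form of $p_{\bA}$ exhibits $\det(I + z_i A_i + z_j A_j)$ as itself completely reducible, so it suffices to prove that for normal $A, B$ with $\det(I + zA + wB) = \prod_{k=1}^N(1 + a_k z + b_k w)$, one has $AB = BA$. The key observation is that the rescaling $z = -s/\lambda$, $w = -t/\lambda$ transforms this identity into
$$\det(\lambda I - sA - tB) = \prod_{k=1}^N(\lambda - s a_k - t b_k) \qquad \text{for all } s, t, \lambda \in \C,$$
so the eigenvalues of $sA + tB$ are the linear functions $s a_k + t b_k$ of $(s,t)$; that is, the pair $(A,B)$ has the classical Property L of Motzkin and Taussky. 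Since both $A$ and $B$ are normal, the Motzkin--Taussky theorem (for pairs of normal matrices with Property L) then yields $AB = BA$, and applying this to every pair in the tuple gives (a).

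The main obstacle I anticipate is invoking the precise form of Motzkin--Taussky appropriate for pairs of normal matrices rather than for the simple-spectrum case. As a self-contained alternative, I would argue directly via spectral decomposition: let $\alpha_1, \dots, \alpha_r$ be the distinct eigenvalues of $A$ with orthogonal spectral projections $P_i$ onto eigenspaces $E_i$, and block-decompose $B = (B_{ij})$ along $\C^N = \bigoplus_i E_i$. Expanding $\det(I + zA + wB)$ via the Schur complement around $(z,w) = (-1/\alpha_i, 0)$ in the local coordinate $u = 1 + \alpha_i z$, the leading homogeneous piece of degree $m_i = \dim E_i$ must match $\prod_{k:\,a_k=\alpha_i}(u+b_k w)$, which fixes the spectrum of the diagonal block $B_{ii}$; matching the next order, into which the off-diagonal blocks enter through $\sum_{j\neq i}(1-\alpha_j/\alpha_i)^{-1} B_{ij}B_{ji}$, should, via the block consequences of $B^*B = BB^*$, force $B_{ij} = 0$ for all $i \neq j$, which is exactly $[A,B] = 0$.
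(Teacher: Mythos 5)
Your proposal is correct and takes a genuinely different route from the paper. The reduction from (c) to the statement that every pair $(A_i,A_j)$ satisfies
$$\det(\lambda I - sA_i - tA_j) = \prod_{k=1}^N(\lambda - s a_k - t b_k)\quad\text{for all } s,t,\lambda,$$
that is, the classical Property L of Motzkin and Taussky, is a clean observation, and the theorem you then invoke --- normal matrices with Property L commute --- is due to Wielandt (1953), extending the Hermitian case of Motzkin--Taussky (1952). That single algebraic step replaces the paper's entire analytic apparatus for the hard implication: the paper instead proves (a) $\Leftrightarrow$ (b) by the same inductive ``peel off a common eigenvector'' argument used in Theorems~\ref{11} and~\ref{12}, with Lemma~\ref{14} supplying the common eigenvector; Lemma~\ref{14} in turn rests on perturbing $A$ to matrices with simple spectrum, tracking Riesz projections and tangent lines to $\sigma_p$, and passing to a limit using compactness of the unit sphere in $\C^N$. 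What the paper's machinery buys is that essentially the same lemmas drive the compact self-adjoint case (Theorem A), where a Property-L argument is unavailable; what your route buys in the matrix setting is a much shorter and more transparent proof, at the cost of importing Wielandt's theorem as a black box. Your handling of (b) $\Rightarrow$ (c) via the Nullstellensatz is also a valid alternative (the paper never proves (b) $\Rightarrow$ (c) directly; it closes the cycle as (a) $\Leftrightarrow$ (b), (a) $\Rightarrow$ (c), (c) $\Rightarrow$ (b)), but it wants one more sentence: after dividing $p_{\bA}$ by the product of the linear forms cutting out the hyperplanes, the residual factor still has zero set inside that finite union, so every irreducible factor of it is again linear. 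Finally, your ``self-contained'' Schur-complement sketch is not a complete argument --- the claim that the second-order term plus normality of $B$ forces the off-diagonal blocks to vanish would require genuine work --- but you do not need it, since the Property-L route already closes the cycle.
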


As consequences of Theorem A, we will also obtain the following three corollaries.

\begin{thmc}
A compact operator $A$ is normal if and only if $\sigma_p(A,A^*)$ is the union of
countably many, locally finite, complex lines in $\C^2$.
\end{thmc}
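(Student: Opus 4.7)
The plan is to reduce Corollary C to Theorem A by replacing the non-self-adjoint pair $(A,A^*)$ with the self-adjoint pair consisting of its real and imaginary parts. Write
$$B=\frac{A+A^*}{2},\qquad C=\frac{A-A^*}{2i},$$
so that $A=B+iC$ and $A^*=B-iC$. Since $A$ is compact, so are $B$ and $C$, and both are self-adjoint. A direct computation gives $AA^*-A^*A=2i(CB-BC)$, so $A$ is normal if and only if $B$ and $C$ commute. Thus Theorem A, applied to the pair $(B,C)$, reduces the problem to identifying the geometric condition on $\sigma_p(A,A^*)$ with the corresponding condition on $\sigma_p(B,C)$.

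Next I would carry out the linear change of variables that links the two joint spectra. Substituting $A=B+iC$ and $A^*=B-iC$ yields
$$I+zA+wA^*=I+(z+w)B+i(z-w)C,$$
so if we set $u=z+w$ and $v=i(z-w)$, then
$$\ker\bigl(I+zA+wA^*\bigr)=\ker\bigl(I+uB+vC\bigr).$$
The map $T\colon(z,w)\mapsto(u,v)$ is an invertible complex-linear automorphism of $\C^2$ (its determinant is $-2i$), and the identity above means $\sigma_p(A,A^*)=T^{-1}\bigl(\sigma_p(B,C)\bigr)$.

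Since $T$ is a linear homeomorphism of $\C^2$, it sends complex lines to complex lines and preserves countability and local finiteness of any family of subsets. Hence $\sigma_p(A,A^*)$ is a countable, locally finite union of complex lines in $\C^2$ if and only if $\sigma_p(B,C)$ is. Combining this with Theorem A (for $n=2$, so ``hyperplane'' means ``line'') and the equivalence ``$A$ normal iff $B$ and $C$ commute'' established above, both implications of Corollary C follow at once.

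The only point that requires care is the change of variables: one has to note that Theorem A is not directly applicable to $(A,A^*)$ because those operators are not themselves self-adjoint, and that the symmetric/antisymmetric combinations $B,C$ repair this while differing from $(A,A^*)$ only by an invertible linear reparametrization of the ambient $\C^2$. Once this is observed, the rest of the argument is a formal transfer of Theorem A across $T$.
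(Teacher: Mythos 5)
Your proposal is correct and follows essentially the same route as the paper: decompose $A$ into self-adjoint real and imaginary parts (the paper uses $A+A^*$ and $i(A-A^*)$, scalar multiples of your $B$ and $C$), reduce normality of $A$ to commutativity of that pair via Theorem A, and transfer the geometric condition on $\sigma_p$ across the invertible linear change of variables (the paper invokes its Lemma 3 rather than writing out the substitution as you do). No substantive difference.
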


\begin{thmd}
Two compact operators $A$ and $B$ are normal and commute if and only if
$\sigma_p(A,A^*,B,B^*)$ is the union of countably many, locally finite,
complex hyperplanes in $\C^4$.
\end{thmd}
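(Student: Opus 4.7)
My plan is to reduce Corollary D to Theorem A by passing from the tuple $(A, A^*, B, B^*)$ to the $4$-tuple of real and imaginary parts $(A_1, A_2, B_1, B_2)$, where $A = A_1 + iA_2$ and $B = B_1 + iB_2$, and each $A_j, B_j$ is compact and self-adjoint. Theorem A applies directly to this second tuple.

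First I would exhibit the invertible linear change of variables
$$(w_1, w_2, w_3, w_4) = \bigl(z_1 + z_2,\; i(z_1 - z_2),\; z_3 + z_4,\; i(z_3 - z_4)\bigr)$$
under which $I + z_1 A + z_2 A^* + z_3 B + z_4 B^*$ becomes $I + w_1 A_1 + w_2 A_2 + w_3 B_1 + w_4 B_2$. Any invertible linear self-map of $\C^4$ sends hyperplanes to hyperplanes and preserves both countability and local finiteness, so $\sigma_p(A, A^*, B, B^*)$ has the required geometric structure if and only if $\sigma_p(A_1, A_2, B_1, B_2)$ does. Then I would invoke Theorem A applied to $(A_1, A_2, B_1, B_2)$ to convert this geometric condition into the pairwise commutativity of $A_1, A_2, B_1, B_2$.

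The remaining step is to verify that $A_1, A_2, B_1, B_2$ pairwise commute if and only if $A$ and $B$ are both normal and $AB = BA$. The forward implication is a short expansion, since $A, A^*, B, B^*$ are linear combinations of $A_1, A_2, B_1, B_2$. For the converse, the normality of $A$ and of $B$ yields $A_1 A_2 = A_2 A_1$ and $B_1 B_2 = B_2 B_1$; and from $AB = BA$ together with the normality of both operators, two successive applications of the Fuglede--Putnam theorem give first $A^* B = B A^*$ and then $A^* B^* = B^* A^*$. This forces every pair in $\{A, A^*, B, B^*\}$ to commute, and hence every pair of their linear combinations $\{A_1, A_2, B_1, B_2\}$ to commute as well.

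The only point of real substance is the use of Fuglede--Putnam in the last step; everything else is a routine change-of-variables reduction to the framework of Theorem A.
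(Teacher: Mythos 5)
Your proof is correct and follows essentially the same route as the paper's: decompose $A$ and $B$ into compact self-adjoint real and imaginary parts, use the linear change-of-variables lemma to transfer the hyperplane structure of $\sigma_p(A,A^*,B,B^*)$ to that of the self-adjoint $4$-tuple, invoke Theorem A, and handle the algebraic equivalence (pairwise commutativity of the self-adjoint parts $\Leftrightarrow$ $A,B$ normal and commuting) via Fuglede. The paper leaves this last algebraic step as ``easy to check,'' having already recalled Fuglede's theorem at the start of the section, so your explicit Fuglede--Putnam argument merely spells out what the authors took for granted.
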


\begin{thme}
Two compact operators $A$ and $B$ commute completely (that is, $A$ commutes with both $B$
and $B^*$) if and only if each of the four sets $\sigma_p(A\pm A^*, B\pm B^*)$ is the union
of countably many, locally finite, complex lines in $\C^2$.
\end{thme}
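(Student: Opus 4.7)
The plan is to reduce Corollary E to four applications of Theorem A, one for each pair $(A\pm A^{*}, B\pm B^{*})$, with two preliminary observations needed to set up the reduction. First, each of $A\pm A^{*}$ and $B\pm B^{*}$ is compact since compact operators form a self-adjoint ideal. Second, among these only $A+A^{*}$ and $B+B^{*}$ are self-adjoint; $A-A^{*}$ and $B-B^{*}$ are skew-adjoint, and Theorem A requires self-adjointness. I will bypass this by rescaling: if $\lambda\in\C\setminus\{0\}$, then $I+z(\lambda X)+\cdots = I+(\lambda z)X+\cdots$, so multiplying an operator by $\lambda$ rescales the corresponding coordinate of $\sigma_p$ by $1/\lambda$. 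This is a linear automorphism of $\C^{2}$ and therefore preserves the property of being a countable, locally finite union of complex lines. Multiplying each skew-adjoint operator in a pair by $\pm i$ turns it into a self-adjoint one, so each of the four pairs $(A\pm A^{*}, B\pm B^{*})$ is, up to a coordinate rescaling that preserves the geometric hypothesis of Theorem A, a pair of compact self-adjoint operators.

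For the forward direction I will note that complete commutativity of $A$ and $B$ means $AB=BA$ and $AB^{*}=B^{*}A$; taking adjoints gives $A^{*}B^{*}=B^{*}A^{*}$ and $A^{*}B=BA^{*}$, so $\{A,A^{*},B,B^{*}\}$ pairwise commute. Therefore each of $A\pm A^{*}$ commutes with each of $B\pm B^{*}$. Applying Theorem A to each corresponding rescaled self-adjoint pair, each $\sigma_p$ is a countable, locally finite union of lines in $\C^{2}$, and by the rescaling invariance the same holds for the original four sets.

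For the converse, I will run the same argument backward: the assumption on each of the four sets transports, via the scalar rescaling, to the same geometric hypothesis for the four associated pairs of compact self-adjoint operators, so Theorem A yields
\begin{equation*}
[A+A^{*},B+B^{*}]=[A+A^{*},B-B^{*}]=[A-A^{*},B+B^{*}]=[A-A^{*},B-B^{*}]=0.
\end{equation*}
Adding and subtracting the first two gives $[A+A^{*},B]=0=[A+A^{*},B^{*}]$, and similarly the last two give $[A-A^{*},B]=0=[A-A^{*},B^{*}]$; summing and differencing those four relations yields
\begin{equation*}
[A,B]=[A,B^{*}]=[A^{*},B]=[A^{*},B^{*}]=0,
\end{equation*}
which is exactly complete commutativity of $A$ and $B$.

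The only conceptual point that requires care is the rescaling observation that converts skew-adjoint summands to self-adjoint ones without disturbing the line structure of $\sigma_p$; once that is in place, both directions are simply two applications of Theorem A combined with an elementary $2\times 2$ linear decoupling of the mixed commutators. I therefore expect no genuine obstacle beyond verifying this rescaling step cleanly.
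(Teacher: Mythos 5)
Your proposal is correct and follows essentially the same route as the paper: apply the two-operator case of Theorem A (Theorem 11 in the paper) to each of the four pairs after a coordinate change that turns the skew-adjoint summands into self-adjoint ones (the paper delegates this to its Lemma 3, of which your scalar rescaling $X\mapsto iX$ is the relevant special case), and then finish with the same elementary linear decoupling of the four commutator identities. The only cosmetic difference is that the paper extracts $[A,B]=0$ by summing all four relations and then adds two of them to get $[A,B^*]=0$, whereas you decouple them in two stages; both are trivially equivalent.
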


We will give a simple example of two $2\times2$ matrices $A$ and $B$ such that
$\sigma_p(A,B)$ is the union of two complex lines in $\C^2$, but $AB\not=BA$. This
shows that additional assumptions (such as normality or self-adjointness), other than
compactness, are indeed necessary.

We wish to thank our colleague Rongwei Yang for many useful conversations.

\section{The example of $2\times2$ matrices}

To motivate later discussions and to convince the reader that our main results
are indeed correct, we begin with the case of $2\times2$ matrices. In this case,
we can solve the problem by explicit computation. However, it will be clear that
this direct approach is impossible to extend to higher order matrices, let alone
arbitrary operators. New ideas are needed to tackle the problem for more general
operators, including higher order matrices.

Thus we begin with two normal $2\times2$ matrices $A$ and $B$, and proceed to show
that $AB=BA$ if and only if $\sigma_p(A,B)$ is the union of finitely many complex
lines in $\C^2$ if and only if the {\em characteristic polynomial} of $(A,B)$,
$\det(zA+wB+I)$, can be factored into the product of linear polynomials.

Since $A$ is normal, there exists a unitary matrix $U$ such that $A=U^*DU$, where
$D$ is diagonal. If $p(z,w)$ denotes the characteristic polynomial of $(A,B)$, then
\begin{eqnarray*}
p(z,w)&=&\det(I+zA+wB)\\
&=&\det(U*(I+zD+wUBU^*)U)\\
&=&\det(I+zD+wUBU^*)\\
&=&q(z,w),
\end{eqnarray*}
where $q(z,w)$ is the characteristic polynomial for the pair $(D,UBU^*)$.

On the other hand,
\begin{eqnarray*}
AB=BA &\iff& U^*DUB=BU^*DU\\
&\iff& DUB=UBU^*DU\\
&\iff& D(UBU^*)=(UBU^*)D.
\end{eqnarray*}
So $A$ commutes with $B$ if and only if $D$ commutes with $UBU^*$.

It is also easy to verify that $B$ is normal if and only if $UBU^*$ is normal.
Therefore, we have reduced the problem for $2\times2$ matrices to the case when
$A$ is diagonal and $B$ is normal.

Thus we consider the case in which
$$A=\begin{pmatrix} d_1 & 0\cr 0 & d_2\end{pmatrix},\qquad
B=\begin{pmatrix} a & b \cr c & d\end{pmatrix}.$$
A direct calculation shows that $B$ is normal if and only if
\begin{equation}
|b|=|c|,\quad a\overline c+b\overline d=\overline ab+\overline cd.
\label{eq1}
\end{equation}
Another direct calculation shows that $AB=BA$ if and only if
$$d_1=d_2\quad{\rm or}\quad b=c=0.$$
Each of these two conditions implies that $A$ and $B$ are simutaneously
diagonalizable by the same unitary matrix. When $A$ and $B$ are diagonalizable by the
same unitary matrix, it is easy to see that the characteristic polynomial $p$ for
the pair $(A,B)$ is the product of two linear polynomials, and the joint point
spectrum $\sigma_p(A,B)$ is the union of two complex lines (it is possible for them
to degenerate to one) in $\C^2$.

To prove the other direction, we begin with
$$I+zA+wB=\begin{pmatrix} d_1z+aw+1 & bw\cr cw & d_2z+dw+1\end{pmatrix}.$$
The characteristic polynomial $p$ for the pair $(A,B)$ is given by
$$p(z,w)=d_1d_2z^2+(ad-bc)w^2+(ad_2+dd_1)zw+(d_1+d_2)z+(a+d)w+1.$$
We want to see when the polynomial $p(z,w)$ is completely reducible to
linear polynomials. In particular, we want to show that if $p(z,w)$ is
completely reducible, then $A$ and $B$ commute.

By comparing coefficients, we see that
$$p(z,w)=(\lambda_1z+\mu_1w+1)(\lambda_2z+\mu_2w+1)$$
if and only if
\begin{equation}
\begin{cases}
\lambda_1\lambda_2&=d_1d_2\cr
\lambda_1+\lambda_2&=d_1+d_2\cr
\mu_1\mu_2&=ad-bc\cr
\mu_1+\mu_2&=a+d\cr
\lambda_1\mu_2+\mu_1\lambda_2&=ad_2+dd_1.
\end{cases}
\label{eq2}
\end{equation}

From the first two conditions in (\ref{eq2}) we can solve for $\lambda_k$ to obtain
$$\lambda_1=d_1,\lambda_2=d_2,\quad{\rm or}\quad \lambda_1=d_2,\lambda_2=d_1.$$
From the next two conditions in (\ref{eq2}) we can solve for $\mu_k$ to obtain
$$\mu_1=\frac{a+d\pm\sqrt{(a-d)^2+4bc}}{2},\quad \mu_2=\frac{a+d\mp\sqrt{(a-d)^2+4bc}}{2}.$$

Choosing $\lambda_1=d_1$, $\lambda_2=d_2$, $\mu_1$ with the plus sign, and $\mu_2$ with
the minus sign, we obtain
$$\lambda_1\mu_2+\mu_1\lambda_2=\frac12(d_1+d_2)(a+d)+\frac12(d_2-d_1)\sqrt{(a-d)^2+4bc}.$$
So the fifth condition in (\ref{eq2}), which we call the compatibility condition, becomes
\begin{eqnarray*}
(d_2-d_1)\sqrt{(a-d)^2+4bc}&=&2(ad_2+dd_1)-(d_1+d_2)(a+d)\\
&=&(d_2-d_1)(a-d).
\end{eqnarray*}

Now suppose the polynomial $p(z,w)$ is completely reducible, so that
\begin{equation}
(d_2-d_1)\sqrt{(a-d)^2+4bc}=(d_2-d_1)(a-d).
\label{eq3}
\end{equation}
There are two cases to consider. If $d_1=d_2$, then $A$ is a multiple of the identity matrix,
so $A$ commutes with $B$. If $d_1\not=d_2$, then
$$\sqrt{(a-d)^2+4bc}=a-d.$$
Squaring both sides gives us $bc=0$. Combining this with (\ref{eq1}), we obtain $b=c=0$, so
that $B$ is diagonal and commutes with $A$.

The three remaining choices for $\{\lambda_1,\lambda_2,\mu_1,\mu_2\}$ are handled in
exactly the same way. This completes the proof of our main result for $2\times2$
normal matrices.

It is of course possible that other (potentially simpler) approaches exist for the case of
$2\times2$ matrices. It is however difficult for us to imagine that a computational approach
can be found that would work for $N\times N$ matrices in general.

\section{The projective spectrum}

Recall that for for an operator tuple $\bA=(A_1,\cdots,A_n)$ on a Hilbert space $H$
the projective spectrum is the set $\Sigma(\bA)$ consisting of points $z=(z_1,\cdots,
z_n)\in\cn$ such that the operator $z_1A_1+\cdots+z_nA_n$ is not invertible. We will
also consider the set $\Sigma_p(\bA)$ of points $(z_1,\cdots,z_n)$ in $\cn$ such that
the operator $z_1A_1+\cdots+z_nA_n$ has a nontrivial kernel. This set will be called
the projective point spectrum of $\bA$.

From the Introduction and from the classical definition of spectrum for a
single operator we see that it is often necessary to append the identity operator $I$ to
any operator tuple we wish to study. In particular, we show that for any compact operator
tuple $(A_1,\cdots,A_n)$ the projective spectrum and the projective point spectrum for
the expanded tuple $(A_1,\cdots,A_n, I)$ are essentially the same.

\begin{prop}
Suppose $(A_1,\cdots,A_n)$ is a tuple of compact operators on $H$ and
$\bA=(A_1,\cdots,A_n,I)$. Then
$$\Sigma(\bA)\setminus\{z_{n+1}=0\}=\Sigma_p(\bA)\setminus\{z_{n+1}=0\}.$$
\label{1}
\end{prop}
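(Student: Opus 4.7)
The plan is to reduce the statement to the classical Fredholm alternative for compact operators. The inclusion $\Sigma_p(\bA)\setminus\{z_{n+1}=0\}\subseteq \Sigma(\bA)\setminus\{z_{n+1}=0\}$ is automatic from the definitions: any operator with nontrivial kernel fails to be invertible. So the real content is the reverse inclusion.

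To prove the reverse inclusion, I would fix a point $z=(z_1,\ldots,z_n,z_{n+1})$ with $z_{n+1}\neq 0$ that lies in $\Sigma(\bA)$, and factor
\[
z_1A_1+\cdots+z_nA_n+z_{n+1}I = z_{n+1}\bigl(I+K\bigr),\qquad
K=\frac{z_1}{z_{n+1}}A_1+\cdots+\frac{z_n}{z_{n+1}}A_n.
\]
The operator $K$ is a finite linear combination of compact operators, hence compact. Since $z_{n+1}\neq 0$, non-invertibility of the left-hand side is equivalent to non-invertibility of $I+K$.

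At this point I would invoke the Fredholm alternative: for any compact operator $K$ on a Hilbert space, $I+K$ is Fredholm of index zero, so it is invertible if and only if it is injective. Therefore $I+K$ fails to be invertible precisely when it has a nontrivial kernel, which is exactly the condition for $z\in\Sigma_p(\bA)$. This yields $z\in\Sigma_p(\bA)\setminus\{z_{n+1}=0\}$ and completes the proof.

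There is no substantive obstacle here; the only point requiring care is to make sure that the operator $K$ really is compact (which is immediate since compact operators form a two-sided ideal, in particular a linear subspace) and that the Fredholm alternative applies in the form stated. The role of the hypothesis $z_{n+1}\neq 0$ is precisely to let us divide through and present the pencil as a compact perturbation of the identity; when $z_{n+1}=0$ no such factorization is available, which is exactly why that hyperplane must be excluded from the statement.
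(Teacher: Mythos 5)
Your proposal is correct and follows essentially the same route as the paper: both arguments recognize the operator pencil as $z_{n+1}$ times a compact perturbation of the identity and then invoke its Fredholm index-zero property (you via the Fredholm alternative, the paper via Atkinson's theorem and the Calkin algebra) to conclude that non-invertibility forces a nontrivial kernel. The two phrasings are equivalent in substance.
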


\begin{proof}
It is obvious that the projective point spectrum is contained in the projective spectrum.
Now suppose $z_{n+1}\not=0$ and
$$(z_1,\cdots,z_n,z_{n+1})\in\Sigma(\bA).$$
Then the operator
$$T=z_1A_1+\cdots+z_nA_n+z_{n+1}I$$
is not invertible. We wish to show that $T$ has a nontrivial kernel.

By Atkinson's theorem (see \cite{D} for example), the operator $T$ is Fredholm and
has Fredholm index $0$, because its image in the Calkin algebra is $z_{n+1}$ times the
identity. Therefore, $T$ has closed range, and its kernel and cokernel have the same
finite dimension. Since $T$ is not invertible, we conclude that $T$ has a nontrivial,
finite-dimensional kernel.
\end{proof}

When the identity operator is included in the operator tuple
$$\bA=(A_1,\cdots,A_n,I),$$
we often need to consider the sets
$$\Sigma(\bA)\setminus\{z_{n+1}=0\},\quad\Sigma_p(\bA)\setminus\{z_{n+1}=0\}.$$
It is thus more convenient for us to modify the definition of the projective
spectrum and the projective point spectrum in such situations. Recall from the
Introduction that for an $n$-tuple $\bA=(A_1,\cdots,A_n)$ (not necessarily containing
the identity operator) we define $\sigma(\bA)$ to be the set of points
$z=(z_1,\cdots,z_n)\in\cn$ such that the operator
$$A=z_1A_1+\cdots+z_nA_n+I$$
is not invertible. Similarly, we define $\sigma_p(\bA)$ to be the set of points
$z=(z_1,\cdots,z_n)\in\cn$ such that the operator $A$ above has a nontrivial kernel.
The sets $\sigma(\bA)$ and $\sigma_p(\bA)$ are no longer ``projective'' and should be
considered as subsets of $\cn$ instead.

It is clear that if $z_{n+1}\not=0$, then
$$(z_1,\cdots,z_n,z_{n+1})\in\Sigma(A_1,\cdots,A_n,I)$$
if and only if
$$\left(\frac{z_1}{z_{n+1}},\cdots,\frac{z_n}{z_{n+1}}\right)\in\sigma(A_1,\cdots,A_n).$$
Similarly, if $z_{n+1}\not=0$, then
$$(z_1,\cdots,z_n,z_{n+1})\in\Sigma_p(A_1,\cdots,A_n,I)$$
if and only if
$$\left(\frac{z_1}{z_{n+1}},\cdots,\frac{z_n}{z_{n+1}}\right)\in\sigma_p(A_1,\cdots,A_n).$$
Therefore, any condition in terms of $\sigma(\bA)$ or $\sigma_p(\bA)$ can be rephrased
in terms of the projective spectrum and the projective point spectrum of
$$\bA'=(A_1,\cdots,A_n,I)$$
away from $z_{n+1}=0$, and vise versa. In particular, the following result is a
consequence of Proposition~\ref{1}.

\begin{prop}
If $\bA=(A_1,\cdots,A_n)$ is an $n$-tuple of compact operators on a Hilbert space $H$, then
$\sigma(\bA)=\sigma_p(\bA)$.
\label{2}
\end{prop}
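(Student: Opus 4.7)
The plan is to deduce this immediately from Proposition~\ref{1} (or, equivalently, to redo its one-line proof in this slightly different setting). The inclusion $\sigma_p(\bA)\subseteq\sigma(\bA)$ is trivial, since an operator with nontrivial kernel cannot be invertible. For the reverse inclusion, I want to argue that for any $z\in\cn$ the operator
$$T(z)=I+z_1A_1+\cdots+z_nA_n$$
is a compact perturbation of the identity, hence Fredholm of index $0$ by Atkinson's theorem. Consequently, failure of invertibility forces the kernel to be nontrivial.

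More concretely, I would first observe that a finite linear combination of compact operators is compact, so $K(z):=z_1A_1+\cdots+z_nA_n$ is compact and $T(z)=I+K(z)$. Its image in the Calkin algebra is the identity, which is invertible, so by Atkinson $T(z)$ is Fredholm; and since the image in the Calkin algebra is a scalar multiple of the identity (with scalar $1$), the Fredholm index of $T(z)$ is $0$. Therefore $\dim\ker T(z)=\dim\mathrm{coker}\,T(z)$, and $T(z)$ has closed range. If $T(z)$ is not invertible, then either it is not injective or not surjective; either way, the index-$0$ condition forces $\dim\ker T(z)>0$, so $z\in\sigma_p(\bA)$.

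Alternatively, and perhaps more in the spirit of the paper, one can simply apply Proposition~\ref{1} to the expanded tuple $\bA'=(A_1,\cdots,A_n,I)$ and specialize to $z_{n+1}=1$: by the correspondence established immediately before the statement, $z\in\sigma(\bA)$ iff $(z,1)\in\Sigma(\bA')\setminus\{z_{n+1}=0\}$, and similarly for $\sigma_p$ and $\Sigma_p$, so equality follows at once from Proposition~\ref{1}. There is no real obstacle here; the only point worth emphasizing is that the argument depends crucially on Atkinson's theorem and the fact that the Fredholm index is locally constant and equal to $0$ for compact perturbations of the identity, which is exactly what makes ``not invertible'' equivalent to ``has nontrivial kernel'' in this setting.
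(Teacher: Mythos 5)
Your proposal is correct and follows the paper's approach: the paper derives Proposition~\ref{2} exactly as a corollary of Proposition~\ref{1} via the dehomogenization correspondence $z\mapsto(z,1)$, and your direct Atkinson argument is just an inlined version of the proof of Proposition~\ref{1}. Nothing is missing.
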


Our main focus in the paper is on the relationship between the geometry of the projective
spectrum and the mutual commutativity of an operator tuple. The following result shows that
any linear structure in the projective spectrum is preserved under linear changes of variables.

\begin{lem}
Suppose $\bA=(A_1,\cdots,A_n)$ is an $n$-tuple of bounded linear operators on a Hilbert
space $H$ and
$$C=\begin{pmatrix}c_{11} & c_{12} & \cdots & c_{1n}\cr
                   c_{21} & c_{22} & \cdots & c_{2n}\cr
                   \cdots & \cdots & \cdots & \cdots\cr
                   c_{n1} & c_{n2} & \cdots & c_{nn}\end{pmatrix}$$
is an invertible matrix of complex numbers. If $\bB=(B_1,\cdots,B_n)$, where
$$\begin{cases}
B_1=c_{11}A_1+c_{12}A_2+\cdots+c_{1n}A_n\cr
B_2=c_{21}A_1+c_{22}A_2+\cdots+c_{2n}A_n\cr
\vdots\cr
B_n=c_{n1}A_1+c_{n2}A_2+\cdots+c_{nn}A_n,
\end{cases}$$
then $z=(z_1,\cdots,z_n)\in\sigma(\bB)$ if and only if $w=(w_1,\cdots,w_n)\in\sigma(\bA)$,
where $w=zC$ as matrix multiplication. Furthermore, the complex hyperplane
$$a_1z_1+\cdots+a_nz_n+1=0$$
is contained in $\sigma(\bB)$ if and only if the complex hyperplane
$$b_1z_1+\cdots+b_nz_n+1=0$$
is contained in $\sigma(\bA)$, where
$$(a_1,\cdots,a_n)=(b_1,\cdots,b_n)C^T$$
as matrix multiplication. The same results hold if $\sigma(\bA)$ and $\sigma(\bB)$ are
replaced by $\sigma_p(\bA)$ and $\sigma_p(\bB)$, respectively.
\label{3}
\end{lem}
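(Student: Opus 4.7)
The plan is to observe that the pencil associated with $\bB$ is literally the same family of operators as the pencil associated with $\bA$, after the invertible linear change of parameters $w=zC$. Once that identification is made, every statement in the lemma is a tautology plus bookkeeping.

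First, I would compute directly:
\begin{eqnarray*}
I+\sum_{i=1}^n z_iB_i &=& I+\sum_{i=1}^n z_i\sum_{j=1}^n c_{ij}A_j\\
&=& I+\sum_{j=1}^n\left(\sum_{i=1}^n z_ic_{ij}\right)\!A_j\\
&=& I+\sum_{j=1}^n w_jA_j,
\end{eqnarray*}
where $w_j=\sum_i z_ic_{ij}$, i.e.\ $w=zC$ as a product of the row vector $z$ with the matrix $C$. Since the two operators are literally equal, one fails to be invertible (resp.\ has nontrivial kernel) iff the other does. Together with the fact that $z\mapsto zC$ is a bijection of $\cn$ (because $C$ is invertible), this immediately yields the point-by-point statements for both $\sigma$ and $\sigma_p$.

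Second, for the hyperplane statement I would simply track where the linear equation goes under this bijection. A hyperplane in $z$-space has the form $\{z:az^T=-1\}$ with $a=(a_1,\dots,a_n)$ a row vector. Substituting $z=wC^{-1}$ gives the equivalent equation $a(C^T)^{-1}w^T=-1$, that is, $bw^T=-1$ where $b=a(C^T)^{-1}$, equivalently $a=bC^T$. So the map $z\mapsto zC$ is a bijection of hyperplanes which matches exactly the $a\leftrightarrow b$ correspondence stated in the lemma. Combining this with the previous paragraph, a hyperplane in $z$-space lies in $\sigma(\bB)$ iff its image in $w$-space lies in $\sigma(\bA)$, and similarly for $\sigma_p$.

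There is no real obstacle here; the only thing one has to be careful with is the placement of transposes (hence the appearance of $C^T$ rather than $C$ in the coefficient relation). The entire argument is just the bilinearity of $z_1A_1+\cdots+z_nA_n$ in $(z,\bA)$ together with invertibility of $C$, so the proof should be short and purely formal.
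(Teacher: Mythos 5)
Your proof is correct and is essentially the same as the paper's: both identify $I+\sum z_iB_i$ with $I+\sum w_jA_j$ via $w=zC$ by direct expansion, and both then push the hyperplane equation through the substitution $z=wC^{-1}$ to obtain the coefficient relation $a=bC^T$. The only cosmetic difference is that the paper writes the computation in matrix/column-vector form rather than with summation symbols.
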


\begin{proof}
Formally, we can write
\begin{eqnarray*}
z_1B_1+\cdots+z_nB_n+I&=&(z_1,\cdots,z_n)(B_1,\cdots,B_n)^T+I\\
&=&(z_1,\cdots,z_n)C(A_1,\cdots,A_n)^T+I\\
&=&(w_1,\cdots,w_n)(A_1,\cdots,A_n)^T+I\\
&=&w_1A_1+\cdots+w_nA_n+I
\end{eqnarray*}
as matrix multiplication. This immediately gives the relationship between $\sigma(\bA)$
and $\sigma(\bB)$.

Similarly, a point $z=(z_1,\cdots,z_n)\in\sigma(\bB)$ satisfies the condition
$$a_1z_1+\cdots+a_nz_n+1=0$$
if and only if
$$(z_1,\cdots,z_n)(a_1,\cdots,a_n)^T+1=0$$
if and only if
$$(w_1,\cdots,w_n)C^{-1}(a_1,\cdots,a_n)^T+1=0$$
if and only if
$$b_1w_1+\cdots+b_nw_n+1=0.$$
This completes the proof of the lemma.
\end{proof}

The next few lemmas discuss the case in which a complex hyperplane is
contained in $\sigma_p(\bA)$. These results, some of which are very technical,
contain the main new ideas of the paper and represent the major steps in the
proof of our main results.

\begin{lem}
Suppose the complex hyperplane
\begin{equation}
\lambda_1z_1+\cdots+\lambda_nz_n+1=0
\label{eq4}
\end{equation}
is contained in $\sigma_p(A_1,\cdots,A_n)$. Then each nonzero $\lambda_k$
is an eigenvalue of $A_k$. If $\lambda_k=0$ and $A_k$ has closed range, then
$0$ is an eigenvalue of $A_k$.
\label{4}
\end{lem}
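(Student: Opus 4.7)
The plan is to separate the two cases ($\lambda_k\neq 0$ and $\lambda_k=0$) and in each case pick a well-chosen point, or family of points, on the hyperplane so that only the operator $A_k$ remains to analyze.

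First I would handle the easy case: suppose $\lambda_k\neq 0$. Then the point with $z_k=-1/\lambda_k$ and $z_j=0$ for $j\neq k$ satisfies $\lambda_1z_1+\cdots+\lambda_nz_n+1=0$, so it belongs to the hyperplane and hence to $\sigma_p(\bA)$. Thus $I-\lambda_k^{-1}A_k$ has a nontrivial kernel, which immediately exhibits $\lambda_k$ as an eigenvalue of $A_k$.

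For the second case, assume $\lambda_k=0$ and $A_k$ has closed range. If every $\lambda_j=0$ the hyperplane is empty and there is nothing to prove, so some $\lambda_j\neq 0$ with $j\neq k$. Fix complex numbers $z_j^0$ ($j\neq k$) satisfying $\sum_{j\neq k}\lambda_jz_j^0+1=0$ and set $T=I+\sum_{j\neq k}z_j^0A_j$. Since the entire line $\{(z_1^0,\dots,z_{k-1}^0,w,z_{k+1}^0,\dots,z_n^0):w\in\C\}$ lies in the hyperplane, the operator $T+wA_k$ has a nontrivial kernel for every $w\in\C$. The strategy is to show this forces $A_k$ itself to have nontrivial kernel.

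For each $w\in\C$ choose a unit vector $x_w$ with $Tx_w+wA_kx_w=0$. If $A_kx_w=0$ for some $w$, we are done. Otherwise, for $w\neq 0$ we get the estimate
$$\|A_kx_w\|=\frac{\|Tx_w\|}{|w|}\leq\frac{\|T\|}{|w|}\longrightarrow 0\quad\text{as }|w|\to\infty.$$
Now I would invoke the assumption that $A_k$ has closed range: if in addition $A_k$ were injective, then $A_k\colon H\to\operatorname{Ran}(A_k)$ would be a bijection onto a Banach space, so by the open mapping theorem $A_k$ would be bounded below, contradicting $\|A_kx_w\|\to0$ along unit vectors. Hence $\ker A_k\neq(0)$, i.e.\ $0$ is an eigenvalue of $A_k$. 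The main (and essentially only) obstacle is this final step — recognizing that closed range plus injectivity gives a lower bound — but once identified it dispatches the case cleanly; the rest is bookkeeping on the hyperplane equation.
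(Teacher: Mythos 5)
Your proof is correct and follows essentially the same route as the paper: in both arguments the point is to produce unit vectors $x$ with $\|A_k x\|\to 0$ and then conclude via closed range plus the open mapping theorem. The paper gets these vectors slightly more indirectly — it perturbs the tuple to $B_1=A_1+\varepsilon A_2$, invokes the change-of-variables Lemma~\ref{3}, and applies the $\lambda\neq0$ case to see that $\varepsilon\lambda_2$ is an eigenvalue of $A_\varepsilon=A_1+\varepsilon A_2$ — whereas you simply travel along the line in the $z_k$ direction (which stays in the hyperplane precisely because $\lambda_k=0$) and take kernel vectors of $T+wA_k$ as $|w|\to\infty$. These are the very same vectors (your $w=-1/(\varepsilon\lambda_2)$ matches the paper's $\varepsilon\to0$), so this is the same estimate, but your packaging is cleaner in that it avoids appealing to Lemma~\ref{3} altogether.
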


\begin{proof}
It suffices to consider the case $k=1$.

First assume that $\lambda_1\not=0$. Since the point
$$\left(-\frac1{\lambda_1},0,\cdots,0\right)$$
belongs to the complex hyperplane in (\ref{eq4}), the operator
$$-\frac1{\lambda_1}A_1+I=\frac1{\lambda_1}(\lambda_1I-A_1)$$
has a nontrivial kernel, which means that $\lambda_1$ is an eigenvalue of $A_1$.

Next assume that $\lambda_1=0$. It is clear that at least one of the other $\lambda_k$'s
must be nonzero. Without loss of generality, let us assume that $\lambda_2\not=0$.
Consider the operator tuple $(B_1,B_2,\cdots,B_n)$, where $B_1=A_1+\varepsilon A_2$ and
$B_k=A_k$ for $2\le k\le n$. It follows from the previous paragraph and Lemma~\ref{3} that
$\lambda_\varepsilon:=\varepsilon\lambda_2$ is an eigenvalue of $A_\varepsilon:=A_1+
\varepsilon A_2$. For each $\varepsilon>0$ let $x_\varepsilon$ be a unit eigenvector of
$A_\varepsilon$. Since
\begin{eqnarray*}
\|A_1x_\varepsilon\|&\le&\|A_\varepsilon x_\varepsilon\|+(A_\varepsilon-A_1)x_\varepsilon\|\\
&\le&|\lambda_2|\varepsilon+\|A_\varepsilon-A_1\|\\&=&(|\lambda_2|+\|A_2\|)\varepsilon,
\end{eqnarray*}
it follows that the operator $A_1$ is not bounded below. If we also assume that $A_1$ has
closed range, then we can conclude that $A_1$ has a nontrivial kernel (otherwise, it follows
from the open mapping theorem that it must be bounded below). In other words, $0$ is an
eigenvalue of $A_1$.
\end{proof}

Note that if $\dim(H)<\infty$, then every operator on $H$ has closed range. In particular,
every $N\times N$ matrix, when considered as a linear operator on $\C^N$, has closed
range.

Also note that if $z$ and $w$ satisfy the equation $\lambda z+\mu w+1=0$, then the operator
$A(z)=zA+wB+I$ can be written as
\begin{eqnarray*}
A(z)&=&(\lambda z+1)\left(I-\frac1\mu\,B\right)-\lambda z\left(I-\frac1\lambda\,A\right)\\
&=&t\left(I-\frac1\mu\,B\right)+(1-t)\left(I-\frac1\lambda\,A\right),
\end{eqnarray*}
where $t=1+\lambda z\in\C$. This shows that if $x$ is a common eigenvector for $A$ and
$B$ corresponding to $\lambda$ and $\mu$, respectively, then $x$ belongs to the kernel
of each $A(z)$.

The next two lemmas will allow us to find common eigenvectors for $A$ and $B$ when
$\sigma_p(A,B)$ satisfies certain geometric conditions, for example, when $\sigma_p(A,B)$
contains a complex line in $\C^2$. If $A$ and $B$ are normal matrices of the same size,
then it is well known that they commute if and only if they can be diagonalized by the
same orthonormal basis. Therefore, the commutativity of $A$ and $B$ boils down to the
existence of sufficiently many common eigenvectors. The central idea of the paper is then
how to use certain geometric properties of $\sigma_p(A,B)$ to produce common eigenvectors
for $A$ and $B$.

\begin{lem}
Suppose $A$ and $B$ are both self-adjoint and compact. If the complex line
$\lambda z+\mu w+1=0$ is contained in $\sigma_p(A,B)$, where $\lambda\not=0$ and
$\mu\not=0$, then there exists a unit vector $x\in H$ such that $Ax=\lambda x$
and $\mu=\langle Bx,x\rangle$.
\label{5}
\end{lem}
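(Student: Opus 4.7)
The plan is to exploit the convex combination identity recalled just before the lemma, together with the compactness of $A$ and $B$, to extract an honest eigenvector from a one-parameter family of approximate kernel vectors. By Lemma~\ref{4}, $\lambda$ and $\mu$ are eigenvalues of the self-adjoint operators $A$ and $B$, hence real. First I would parametrize the line by $t\in\C$ via $z_t=(t-1)/\lambda$ and $w_t=-t/\mu$, so the operator in question can be rewritten as
$$I+z_tA+w_tB=(1-t)(I-A/\lambda)+t(I-B/\mu).$$
Since the whole line lies in $\sigma_p(A,B)$, for every $t\in\C$ we can pick a unit vector $x_t$ in the kernel of this operator.

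Taking the inner product of $\bigl(I+z_tA+w_tB\bigr)x_t=0$ with $x_t$ yields
$$(1-t)\alpha(t)+t\beta(t)=0,\quad \alpha(t)=1-\tfrac{1}{\lambda}\langle Ax_t,x_t\rangle,\ \beta(t)=1-\tfrac{1}{\mu}\langle Bx_t,x_t\rangle.$$
Both $\alpha(t)$ and $\beta(t)$ are \emph{real}, since $A$ and $B$ are self-adjoint and $\lambda,\mu\in\R$. For any $t$ with $\mathrm{Im}\,t\neq 0$, the imaginary part of the displayed equation forces $\alpha(t)=\beta(t)$, and the real part then forces both to vanish. Hence for every such $t$,
$$\langle Ax_t,x_t\rangle=\lambda\qquad\text{and}\qquad\langle Bx_t,x_t\rangle=\mu.$$

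Finally, to upgrade $\langle Ax_t,x_t\rangle=\lambda$ into the genuine eigenvalue equation $Ax=\lambda x$, I would choose $t_n\to 0$ with $\mathrm{Im}\,t_n\neq 0$. Compactness of $A$ and then of $B$ lets us pass to a subsequence along which both $Ax_{t_n}$ and $Bx_{t_n}$ converge. The kernel relation $x_{t_n}=-z_{t_n}Ax_{t_n}-w_{t_n}Bx_{t_n}$, together with $z_{t_n}\to -1/\lambda$ and $w_{t_n}\to 0$, then forces $x_{t_n}$ itself to converge in norm to some $x_0$ with $\|x_0\|=1$. Continuity of $A$ and $B$ immediately gives $Ax_0=\lambda x_0$ and $\langle Bx_0,x_0\rangle=\mu$, which is what we want.

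The main obstacle I anticipate is precisely this last step in infinite dimensions: the unit vectors $x_t$ need not have any convergent subsequence a priori, and a mere weak limit could have norm strictly less than one, ruining the argument. The saving feature is that compactness of $A$ and $B$, combined with the fact that each $x_{t_n}$ sits inside the two-dimensional span of $Ax_{t_n}$ and $Bx_{t_n}$ with controlled scalar coefficients, promotes boundedness of $(x_{t_n})$ to strong subsequential convergence and thereby yields a unit-norm limit.
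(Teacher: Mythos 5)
Your proof is correct, and it is a genuinely different and substantially more elementary route than the paper's. The paper proceeds via a real one-parameter perturbation $A_\varepsilon = A + \varepsilon B$, tracks the relevant eigenvector using Riesz projections over a contour around $\lambda$, and recovers $\mu=\langle Bv,v\rangle$ from a Rayleigh-quotient expansion of the perturbed eigenvalue $\lambda_\varepsilon = \lambda + \mu\varepsilon$, which requires care with the orthogonal decomposition of the eigenspace. Your argument avoids the perturbation machinery entirely: you exploit the fact that for a self-adjoint operator the quadratic form $\langle Ax,x\rangle$ is real, so the single complex linear relation $(1-t)\alpha(t)+t\beta(t)=0$ splits into two independent real conditions as soon as $t$ is nonreal, forcing $\langle Ax_t,x_t\rangle=\lambda$ and $\langle Bx_t,x_t\rangle=\mu$ outright; you then upgrade this to a genuine eigenvector by a clean compactness argument, noting that the kernel relation $x_{t_n}=-z_{t_n}Ax_{t_n}-w_{t_n}Bx_{t_n}$ recycles norm-convergence of $Ax_{t_n}$ and $Bx_{t_n}$ into norm-convergence of $x_{t_n}$ to a unit vector. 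This is shorter and requires no knowledge of Riesz projections or analytic perturbation theory.

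One trade-off worth noting: your imaginary-part trick uses points on the line with $\operatorname{Im} t \neq 0$, i.e.\ it genuinely needs the full complex line in $\sigma_p(A,B)$. The paper's perturbation argument, by contrast, uses only a real interval of parameter values $\varepsilon$, which is why the authors remark immediately after the lemma that the conclusion already holds if just the real trace $\lambda x+\mu y+1=0$ in $\R^2$ is contained in $\sigma_p(A,B)$ (a fact they later extend to Lemma~\ref{7}, where only a real curve is available). Your method would not establish that sharper version; but for the lemma as stated, with the complex line assumed, it is a complete and rather elegant proof.
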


\begin{proof}
It follows from Lemma~\ref{4} that $\lambda$ and $\mu$ are eigenvalues of $A$
and $B$, respectively.

Since both $A$ and $B$ are self-adjoint, the eigenvalues $\lambda$ and $\mu$ are real.
Let $\varepsilon$ be any small real number such that $\lambda+\mu\varepsilon\not=0$.
Choose $z$ and $w$ such that
$$-\frac1z=\lambda+\mu\varepsilon,\quad w=-\frac{1+\lambda z}\mu.$$
Then we have
$$\varepsilon=-\frac{1+\lambda z}{\mu z},\qquad \lambda z+\mu w+1=0.$$
It follows that
$$zA+wB+I=z\left(A-\frac{1+\lambda z}{\mu z}\,B+\frac1z\,I\right)
=z(A_\varepsilon-\lambda_\varepsilon I)$$
has nontrivial kernel, where
$$A_\varepsilon=A+\varepsilon B,\qquad \lambda_\varepsilon=\lambda+\mu\varepsilon.$$
In particular, $\lambda_\varepsilon$ is an eigenvalue of $A_\varepsilon$ and
\begin{equation}
\frac{d\lambda_\varepsilon}{d\varepsilon}(0)=\mu.
\label{eq5}
\end{equation}

Because $\varepsilon$ is real, the operator $A_\varepsilon$ is self-adjoint (and,
of course, compact). Since $\lambda$ is an isolated eigenvalue of $A$, there exists a
positive number $\delta$ such that $\lambda$ is the only eigenvalue of $A$ in the
Euclidean disk $D(\lambda,\delta)\subset\C$ and $uI-A$ is invertible on $|u-\lambda|=
\delta$. Let $N$ denote the multiplicity of $\lambda$ and $H_\lambda$ denote the
eigenspace of $A$ corresponding to $\lambda$. In particular, $\dim(H_\lambda)=N$.

There exists a positive number $\sigma$ such that for any real $\varepsilon$ with
$|\varepsilon<|\sigma$ the operator $uI-A_\varepsilon$ is invertible on $|u-\lambda|=
\delta$. For such $\varepsilon$ we consider the Riesz projections
\begin{equation}
P_\varepsilon=\frac1{2\pi i}\int_{\partial D(\lambda,\delta)}(uI-A_\varepsilon)^{-1}\,du,
\label{eq6}
\end{equation}
and
\begin{equation}
P_0=\frac1{2\pi i}\int_{\partial D(\lambda,\delta)}(uI-A)^{-1}\,du.
\label{eq7}
\end{equation}
Since each $A_\varepsilon$ is self-adjoint, the multiplicity of each eigenvalue of
$A_\varepsilon$ is equal to the dimension of the range of the corresponding Riesz projection.
By Theorem 3.1 on page 14 of \cite{GK}, the operator $A_\varepsilon$ has exactly $N$
eigenvalues in $D(\lambda, \delta)$, counting multiplicities, with $\lambda_\varepsilon$
being one of them. Let $E_\varepsilon$ denote the eigenspace of $A_\varepsilon$
corresponding to $\lambda_\varepsilon$.

It is well known that $P_0$ is the orthogonal projection from $H$ onto $H_\lambda$.
Since $A_\varepsilon$ is self-adjoint, $P_\varepsilon$ is an orthogonal projection too.
Suppose
$$\{\lambda_{\varepsilon,1},\cdots,\lambda_{\varepsilon,k}\}$$
are the distinct eigenvalues of $A_\varepsilon$ in $D(\lambda,\delta)$,
$$\{E_{\varepsilon,1},\cdots,E_{\varepsilon,k}\}$$
are the corresponding eigenspaces, and
$$\{P_{\varepsilon,1},\cdots,P_{\varepsilon,k}\}$$
are the associated orthogonal projections. Here $\lambda_{\varepsilon,1}=
\lambda_\varepsilon$. We then have
$$E_\varepsilon=E_{\varepsilon,1}\oplus\cdots\oplus E_{\varepsilon,k},\quad
P_\varepsilon=P_{\varepsilon,1}\oplus\cdots\oplus P_{\varepsilon,k}.$$

It is easy to check that $P_\varepsilon\to P_0$ as $\varepsilon\to0$ and we may assume that
$\sigma$ was chosen so that $\dim E_\varepsilon=\dim H_\lambda$ for all $|\varepsilon|
<\sigma$. It is also easy to check that  $\dim E_{\varepsilon,1}$ remains constant for
$\varepsilon$ small enough. Thus, we may also assume that $\dim E_{\varepsilon,1}=n_1$ for
$|\varepsilon|<\sigma$ and $P_{\varepsilon,1}\to P_1$ as $\varepsilon\to0$, where
$P_1$ is the orthogonal projection from $H$ onto a closed subspace $H_1\subset H_\lambda$
with $\dim H_1=n_1\le N$.

Fix any unit vector $v\in H_1$, so that $Av=\lambda v$. We are going to show that
$\mu=\langle Bv,v\rangle$. To this end, we consider the vector $v_\varepsilon=
P_\varepsilon v$ and use $\int$ to denote $\int_{|u-\lambda|=\delta}$. Then
\begin{eqnarray*}
v_\varepsilon&=&\frac1{2\pi i}\int(uI-A-\varepsilon B)^{-1}v\,du\\
&=&\frac1{2\pi i}\int\left(I-\varepsilon(uI-A)^{-1}B\right)^{-1}(uI-A)^{-1}v\,du\\
&=&\sum_{k=0}^\infty\frac{\varepsilon^k}{2\pi i}\int\left((uI-
A)^{-1}B\right)^k(uI-A)^{-1}v\,du\\
&=&\frac1{2\pi i}\int(uI-A)^{-1}v\,du\\
&&\ +\frac\varepsilon{2\pi i}\int(uI-A)^{-1}B(uI-A)^{-1}v\,du+O(\varepsilon^2)\\
&=&v+\frac\varepsilon{2\pi i}\int\frac{(uI-A)^{-1}Bv}{u-\lambda}\,du+O(\varepsilon^2)\\
&=&v+\varepsilon\tilde v+O(\varepsilon^2),
\end{eqnarray*}
where
$$\tilde v=\frac1{2\pi i}\int\frac{(uI-A)^{-1}Bv}{u-\lambda}\,du.$$

Since $A$ is compact and self-adjoint, there exists an orthonormal basis $\{v_k\}$ of
$H$ consisting of eigenvectors of $A$. If we write
$$Bv=\sum_{I}c_kv_k+\sum_{II}c_kv_k,$$
where $Av_k=\lambda v_k$ in the first sum above and $Av_k=\lambda_kv_k$ with $\lambda_k
\not=\lambda$ in the second sum above, then
$$\tilde v=\sum_{II}\frac{c_k}{\lambda-\lambda_k}\,v_k.$$
It follows that
$$\langle\tilde v,v\rangle=\sum_{II}\frac{c_k}{\lambda-\lambda_k}\langle v_k,v\rangle=0,$$
from which we deduce that
$$\|v_\varepsilon\|^2=1+O(\varepsilon^2).$$

On the other hand, we have
$$\langle A\tilde v,v\rangle=\sum_{II}\frac{c_k\lambda_k}{\lambda-\lambda_k}
\langle v_k,v\rangle=0.$$
Our choice of $v$ from $H_1$, which is the limit of $H_{\varepsilon,1}$,
ensures that $v_\varepsilon\in E_{\varepsilon,1}$ and so $A_\varepsilon v_\varepsilon=
\lambda_\varepsilon v_\varepsilon$. Therefore,
\begin{eqnarray*}
\lambda_\varepsilon&=&\frac{\langle A_\varepsilon v_\varepsilon,v_\varepsilon\rangle}
{\|v_\varepsilon\|^2}=\langle(A+\varepsilon B)v_\varepsilon,v_\varepsilon\rangle
+O(\varepsilon^2)\\
&=&\langle Av_\varepsilon,v_\varepsilon\rangle+\varepsilon\langle Bv_\varepsilon,
v_\varepsilon\rangle+O(\varepsilon^2)\\
&=&\langle Av+\varepsilon A\tilde v+O(\varepsilon^2),v+\varepsilon\tilde v+O(\varepsilon^2)\\
&&\ +\,\varepsilon\langle Bv+\varepsilon B\tilde v+O(\varepsilon^2),v+\varepsilon\tilde v
+O(\varepsilon^2)\rangle+O(\varepsilon^2)\\
&=&\lambda+\varepsilon\langle Bv,v\rangle+O(\varepsilon^2).
\end{eqnarray*}
This implies that
\begin{equation}
\frac{d\lambda_{\varepsilon}}{d\varepsilon}(0)=\langle Bv,v\rangle.
\label{eq8}
\end{equation}
Combining (\ref{eq5}) with (\ref{eq8}), we conclude that $\mu=\langle Bv,v\rangle$ and
the proof is complete.
\end{proof}

Note that Lemma~\ref{5} still holds if we only assumed that the real line
$$\lambda x+\mu y+1=0$$
in $\R^2$ is contained in $\sigma_p(A,B)$. Also, the condition of $A$
and $B$ being self-adjoint in Lemma \ref{5} was imposed to guarantee that the perturbed Riesz
projection $P_\varepsilon$ takes an eigenvector of $A$ to an eigenvector of $A_\varepsilon$.
Our next result shows that if $\lambda$ is a simple eigenvalue of $A$, then we
just need the operator $A$ to be normal (no assumption on $B$ is necessary), and the
geometric condition on $\sigma_p(A,B)$ can be relaxed.

Recall from \cite{SYZ} that for any compact operators $A$ and $B$ the joint spectrum
$\sigma_p(A,B)$ is an analytic set of codimension $1$ in $\C^2$. In other words, for any
point $(z_0,w_0)\in\sigma_p(A,B)$, there exists a neighborhood $U$ of $(z_0,w_0)$ and a
holomorphic function $F(z,w)$ on $U$ such that
$$\sigma_p(A,B)\cap U=\{(z,w)\in U:F(z,w)=0\}.$$
Further recall that a point of an analytic set is called regular if near this point the set
is a complex manifold. If a point is not regular, it is called singular. Here we are dealing
with analytic sets of pure codimension one. It is well-known that in this case if the set
has multiplicity one (if we consider an analytic set as a divisor with multiplicities), a point
is singular if and only if the differential of the local defining function vanishes at this
point. It is also well-known that the singularity of a point is independent of the choice
of the defining function and that the set of singular points has higher codimension.
These facts together with more advanced results on analytic sets can be found
in \cite{Ch}. For reasons mentioned above we call a point $(z_0,w_0)\in\sigma_p(A,B)$
singular if
$$\frac{\partial F}{\partial z}(z_0,w_0)=\frac{\partial F}{\partial w}(z_0,w_0)=0,$$
where $F$ is a local defining function for $\sigma_p(A,B)$.

\begin{lem}
Suppose $A$ and $B$ are both compact, $A$ is normal, $\lambda$ is a nonzero eigenvalue of
$A$ of multiplicity one, and $(-1/\lambda,0)$ is not a singular point of the analytic set
$\sigma_p(A,B)$. Then there exists a unit vector $x\in H$ such that $Ax=\lambda x$ and
$\mu=\langle Bx,x\rangle$, where $\lambda z+\mu w+1=0$ is tangent to $\sigma_p(A,B)$
at $(-1/\lambda,0)$.
\label{6}
\end{lem}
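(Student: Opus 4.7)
The plan is to adapt the Riesz-projection perturbation argument of Lemma~\ref{5}: the hypothesis that $\lambda$ is a simple eigenvalue now plays the role that self-adjointness played before, and lets us take $\varepsilon$ complex and drop all conditions on $B$ beyond compactness. The strategy is to construct an explicit holomorphic curve in $\sigma_p(A,B)$ through $(-1/\lambda,0)$, compute its tangent at $\varepsilon=0$, and read off $\mu$ by matching this tangent against the hypothesized tangent line.

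First I would set up the perturbation. Because $\lambda\ne 0$ is an isolated simple eigenvalue of the compact normal operator $A$, there is a $\delta>0$ so that $\lambda$ is the only point of the spectrum of $A$ in $\overline{D(\lambda,\delta)}$. For all sufficiently small complex $\varepsilon$ the Riesz projection
$$P_\varepsilon=\frac{1}{2\pi i}\int_{|u-\lambda|=\delta}(uI-A-\varepsilon B)^{-1}\,du$$
is well-defined and depends holomorphically on $\varepsilon$. Since $P_0$ has rank $1$, by continuity $P_\varepsilon$ has rank $1$ for $|\varepsilon|$ small, and the restriction of $A+\varepsilon B$ to the line $P_\varepsilon H$ is scalar multiplication by some holomorphic function $\lambda_\varepsilon$ with $\lambda_0=\lambda$. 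Taking $x$ to be the unit eigenvector of $A$ for $\lambda$, the vector $v_\varepsilon:=P_\varepsilon x$ is nonzero for small $\varepsilon$ and satisfies $(A+\varepsilon B)v_\varepsilon=\lambda_\varepsilon v_\varepsilon$.

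Next I would produce the curve
$$\gamma(\varepsilon)=\left(-\frac{1}{\lambda_\varepsilon},\,-\frac{\varepsilon}{\lambda_\varepsilon}\right).$$
Since $I-(1/\lambda_\varepsilon)A-(\varepsilon/\lambda_\varepsilon)B=-(1/\lambda_\varepsilon)(A+\varepsilon B-\lambda_\varepsilon I)$ has the nontrivial kernel $v_\varepsilon$, we have $\gamma(\varepsilon)\in\sigma_p(A,B)$ with $\gamma(0)=(-1/\lambda,0)$. As $\gamma$ is holomorphic and $\gamma_2'(0)=-1/\lambda\ne 0$, it is a nonsingular holomorphic curve; since $(-1/\lambda,0)$ is by hypothesis a regular point of the pure codimension-one analytic set $\sigma_p(A,B)$, the image of $\gamma$ lies in a smooth complex one-dimensional manifold and $\gamma'(0)$ spans the complex tangent line there. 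The key calculation is then the first-order formula $\lambda_\varepsilon=\lambda+\varepsilon\langle Bx,x\rangle+O(\varepsilon^2)$, which I would derive exactly as in Lemma~\ref{5}: expanding $v_\varepsilon=x+\varepsilon\tilde v+O(\varepsilon^2)$ via the Neumann series for $(uI-A-\varepsilon B)^{-1}$, decomposing $Bx$ in the orthonormal $A$-eigenbasis that normality supplies, and using the fact that $x$ is orthogonal to every other $A$-eigenvector, one obtains $\langle\tilde v,x\rangle=\langle A\tilde v,x\rangle=0$, and hence $\lambda_\varepsilon=\langle (A+\varepsilon B)v_\varepsilon,x\rangle/\langle v_\varepsilon,x\rangle=\lambda+\varepsilon\langle Bx,x\rangle+O(\varepsilon^2)$. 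Therefore $\gamma'(0)=(\langle Bx,x\rangle/\lambda^2,\,-1/\lambda)$, and the tangency condition $\lambda\gamma_1'(0)+\mu\gamma_2'(0)=0$ extracted from the line $\lambda z+\mu w+1=0$ yields $\mu=\langle Bx,x\rangle$, as required.

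The main place where the hypotheses do real work is this last computation. The rank-one property of $P_\varepsilon$ ensures that $v_\varepsilon$ is a genuine eigenvector (not merely a Jordan-type vector), so $\lambda_\varepsilon$ is unambiguously defined. The normality of $A$ is essential in two related ways: it gives the orthonormal eigenbasis that makes the expression for $\tilde v$ explicit, and it makes $x$ its own dual, so that $\lambda_\varepsilon'(0)$ comes out as $\langle Bx,x\rangle$ rather than the biorthogonal expression $\langle Bx,y\rangle/\langle x,y\rangle$ with $y$ a left eigenvector of $A$. Without normality the same strategy still produces a tangent vector to $\sigma_p(A,B)$, but the clean identification of $\mu$ with $\langle Bx,x\rangle$ breaks down.
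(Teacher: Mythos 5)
Your argument is correct and rests on the same core machinery as the paper's proof — the Riesz-projection perturbation of the simple eigenvalue $\lambda$, and the first-order formula $\lambda_\varepsilon=\lambda+\varepsilon\langle Bx,x\rangle+O(\varepsilon^2)$ obtained by expanding $P_\varepsilon x$ in the orthonormal $A$-eigenbasis and using $\langle\tilde v,x\rangle=\langle A\tilde v,x\rangle=0$ — but it is organized more efficiently. The paper parametrizes $\sigma_p(A,B)$ near $(-1/\lambda,0)$ by the implicit function theorem as $z=\varphi(w)$, substitutes $\varepsilon=w/\varphi(w)$, and then unwinds a fairly long Taylor expansion in $w$ before matching $\langle Bv_1,v_1\rangle$ with $-\lambda\varphi'(0)$; you instead build the holomorphic curve $\gamma(\varepsilon)=(-1/\lambda_\varepsilon,-\varepsilon/\lambda_\varepsilon)$ inside $\sigma_p(A,B)$ directly from the perturbation, note that at a regular point the nonzero vector $\gamma'(0)$ spans the complex tangent line, and read off $\mu$ from the tangency relation $\lambda\gamma_1'(0)+\mu\gamma_2'(0)=0$. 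A small bonus of your route is that the paper's case distinction (over which partial of the local defining function is nonvanishing) evaporates: $\gamma_2'(0)=-1/\lambda\neq0$ already forces the tangent line to differ from $w=0$, hence to be normalizable to the affine form $\lambda z+\mu w+1=0$ assumed in the statement. One detail worth making explicit is why $\lambda_\varepsilon$ is holomorphic in the \emph{complex} parameter $\varepsilon$: since $P_\varepsilon$ is a rank-one holomorphic family and $A_\varepsilon P_\varepsilon=\lambda_\varepsilon P_\varepsilon$, one has $\lambda_\varepsilon=\operatorname{tr}(A_\varepsilon P_\varepsilon)$, which is visibly holomorphic in $\varepsilon$.
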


\begin{proof}
For $\varepsilon$ close to $0$ we write $A_\varepsilon=A+\varepsilon B$. By the continuity
of spectrum (see \cite{CM} for example), there exists an eigenvalue $\lambda_\varepsilon$
of $A_\varepsilon$ that is close to $\lambda$. Since $\lambda\not=0$, we may as well
assume that $\lambda_\varepsilon\not=0$ for all small $\varepsilon$. It is then clear that
$(-1/\lambda_\varepsilon,-\varepsilon/\lambda_\varepsilon)\in\sigma_p(A,B)$. Therefore,
$(-1/\lambda_\varepsilon,-\varepsilon/\lambda_\varepsilon)$ is the intersection of the
analytic set $\sigma_p(A,B)$ and the complex line $w=\varepsilon z$ in $\C^2$.

Just as in the proof of Lemma \ref{5}, since $\lambda$ is an isolated eigenvalue of $A$,
there exists a positive $\delta$ such that $\lambda$ is the only eigenvalue of $A$ in
the Euclidean disk $D(\lambda,\delta)\subset\C$. Again we can also assume that $\sigma<|\lambda|$
and the operators $uI-A$ are invertible on $|u-\lambda|=\delta$ and conclude that there
exists a positive number $\sigma$ such that for any $\varepsilon$ with $|\varepsilon|<\sigma$
the operators $uI-A_\varepsilon$ are invertible on $|u-\lambda|=\delta$ and
$\lambda_\varepsilon$ is the only eigenvalue of $A_\varepsilon$ in $D(\lambda,\delta)$.

We now consider the corresponding Riesz projections in (\ref{eq6}) and (\ref{eq7}).

Since $A$ is normal, its associated Riesz projection $P_0$ is the orthogonal projection
onto the one-dimensional eigenspace of $A$ corresponding to the eigenvalue $\lambda$ (recall
that the multiplicity of $\lambda$ is one).

We are not making any assumptions about the compact operator $B$. So the operators
$A_\varepsilon$ are not necessarily normal, and the projections $P_\varepsilon$
are not necessarily orthogonal. However, $\|A_\varepsilon-A\|\to0$ easily implies that
$\|P_\varepsilon-P_0\|\to0$. So by Lemma 3.1 on page 13 of \cite{GK}, we may as well conclude
each $P_\varepsilon$ is a one-dimensional projection. Furthermore, since for a compact operator
the range of the Riesz projection contains all eigenspaces corresponding to the eigenvalues
inside the contour of integration (see \cite{GK}), the range of $P_\varepsilon$
being one-dimensional is the eigenspace of $A_\varepsilon$ corresponding to
$\lambda_\varepsilon$. It follows that
\begin{equation}
A_\varepsilon P_\varepsilon x=\lambda_\varepsilon P_\varepsilon x
\label{eq9}
\end{equation}
for every $x\in H$.

A computation similar to the one in Lemma \ref{5} now shows that
\begin{equation}
P_\varepsilon=P_0+\frac\varepsilon{2\pi i}\int (uI-A)^{-1}B(uI-A)^{-1}\,du
+O(\varepsilon^2).
\label{eq10}
\end{equation}
In particular, if $Av_1=\lambda v_1$, then
\begin{equation}
P_\varepsilon v_1=v_1+\frac\varepsilon{2\pi i}\int\frac{(uI-A)^{-1}Bv_1}{u-\lambda}
\,du+O(\varepsilon^2).
\label{eq11}
\end{equation}
Suppose that in a neighborhood of the point $(-1/\lambda,0)$ the analytic set
$\sigma_p(A,B)$ is given by the equation $F(z,w)=0$, where $F$ is holomorphic and
at least one of the two partial derivatives $\partial F/\partial z$ and
$\partial F/\partial w$ is nonvanishing in a (possibly small) neighborhood of
$(-1/\lambda,0)$.

Let us first consider the case in which $\partial F/\partial z$ is nonvanishing in a
neighborhood of $(-1/\lambda,0)$. By the implicit function theorem, there exists an
analytic function $z=\varphi(w)$, $|w|<r_0$, such that $\sigma_p(A,B)$ is
the analytic curve $z=\varphi(w)$ near the point $(-1/\lambda,0)$. In particular,
the equation (point-slope form) of the tangent line of $\sigma_p(A,B)$ at the point
$(-1/\lambda,0)$ is given by
$$z+\frac1\lambda=\varphi'(0)(w-0),$$
or
$$\lambda z-\varphi'(0)w+1=0.$$
Consequently, $\mu=-\lambda\varphi'(0)$.

Since $\varphi(0)\not=0$, we may assume that $r_0$ is small enough so that the function
$\psi(w)=w/\varphi(w)$ is well defined and analytic for $|w|<r_0$. Let $\varepsilon=
\psi(w)$ for $|w|<r_0$. Then $\psi(0)=0$, $\psi'(0)=-\lambda$, and
\begin{equation}
\psi(w)=-\lambda w+O(|w|^2).
\label{eq12}
\end{equation}
Since $(\varphi(w),w)\in\sigma_p(A,B)$ for $|w|<r_0$, the operators $\varphi(w)A+wB+I$
have nontrivial kernels for $|w|<r_0$. Equivalently, the operators
$$A+\frac w{\varphi(w)}B+\frac1{\varphi(w)}I=A+\varepsilon B+\frac1{\varphi(w)}I$$
have nontrivial kernels for $|w|<r_0$. This shows that
\begin{equation}
\lambda_\varepsilon=-\frac1{\varphi(w)}=\lambda+\lambda^2\varphi'(0)w+O(|w|^2)
\label{eq13}
\end{equation}
for $|w|$ (or equivalently $\varepsilon$) sufficiently small.

By (\ref{eq13}), we have
\begin{equation}
\varepsilon=\frac w{\varphi(w)}=-w(\lambda+\lambda^2\varphi'(0)w+O(|w|^2))=
-\lambda w+O(|w|^2).
\label{eq14}
\end{equation}
Therefore, we can rewrite (\ref{eq10}) in terms of $w$ as follows:
\begin{equation}
P_w=P_0-\frac{\lambda w}{2\pi i}\int(uI-A)^{-1}B(uI-A)^{-1}\,du+O(|w|^2).
\label{eq15}
\end{equation}
Let $\{v_1,v_2,v_3,\cdots,\}$ be an orthonormal basis of $H$ consisting of eigenvectors
of $A$ with $Av_1=\lambda v_1$. Since $\lambda$ is an eigenvalue of multiplicity $1$, we
have $Av_k=\lambda_k v_k$, $\lambda_k\not=\lambda$, for all $k\ge2$.

By equations (\ref{eq9}) and (\ref{eq13})-(\ref{eq15}), we have
\begin{eqnarray*}
(A-\lambda wB+O(|w|^2))P_\varepsilon v_1&=&(A_\varepsilon+O(|w|^2))P_\varepsilon v_1\\
&=&A_\varepsilon P_\varepsilon v_1+O(|w|^2)\\
&=&\lambda_\varepsilon P_\varepsilon v_1+O(|w|^2)\\
&=&(\lambda+\lambda^2\varphi'(0)w+O(|w|^2))P_\varepsilon v_1.
\end{eqnarray*}
This along with (\ref{eq11}) shows that
$$\left(A-\lambda wB+O(|w|^2)\right)\left[v_1-\frac{\lambda w}{2\pi i}\int
\frac{(uI-A)^{-1}Bv_1}{u-\lambda}\,du+O(|w|^2)\right]$$
is equal to
$$(\lambda+\lambda^2\varphi'(0)w+O(|w|^2))\left[v_1-\frac{\lambda w}{2\pi i}
\int\frac{(uI-A)^{-1}Bv_1}{u-\lambda}\,du+O(|w|^2)\right].$$
Since
$$\frac1{2\pi i}\int\frac{(uI-A)^{-1}v_1}{u-\lambda}\,du=\frac1{2\pi i}\int
\frac{v_1du}{(u-\lambda)^2}=0,$$
and
$$\frac1{2\pi i}\int\frac{(uI-A)^{-1}v_j}{u-\lambda}\,du=\frac1{2\pi i}\int
\frac{v_j\,du}{(u-\lambda)(u-\lambda_j)}=\frac{v_j}{\lambda-\lambda_j}$$
for $j\ge2$, we have
\begin{eqnarray*}
\frac1{2\pi i}\int\frac{(uI-A)^{-1}Bv_1}{u-\lambda}\,du&=&\sum_{j=2}^\infty
\frac1{2\pi i}\int\frac{(uI-A)^{-1}\langle Bv_1,v_j\rangle}{u-\lambda}\,v_j\,du\\
&&\ +\frac1{2\pi i}\int\frac{(uI-A)^{-1}\langle Bv_1,v_1\rangle}{u-\lambda}\,v_1\,du\\
&=&\sum_{j=2}^\infty\frac{\langle Bv_1,v_j\rangle}{\lambda-\lambda_j}\,v_j.
\end{eqnarray*}
Thus
$$(A-\lambda wB+O(|w|^2))\left[v_1-\lambda w\sum_{j=2}^\infty\frac{\langle
Bv_1,v_j\rangle}{\lambda-\lambda_j}\,v_j+O(|w|^2)\right]$$
is equal to
$$(\lambda+\lambda^2\varphi'(0)w+O(|w|^2))\left[v_1-\lambda w
\sum_{j=2}^\infty\frac{\langle Bv_1,v_j\rangle}{\lambda-\lambda_j}\,v_j+O(|w|^2)\right].$$
Multiplying everything out, we obtain
$$-\lambda w\sum_{j=2}^\infty\frac{\langle Bv_1,v_j\rangle}{\lambda-\lambda_j}
\,Av_j-\lambda wBv_1$$
$$=\lambda^2\varphi^\prime(0)wv_1-\lambda^2w\sum_{j=2}^\infty\frac{\langle Bv_1,v_j\rangle}
{\lambda-\lambda_j}\,v_j+O(|w|^2).$$
Since $\langle Av_j,v_1\rangle=\lambda_j\langle v_j,v_1\rangle=0$ for $j\ge2$, taking
the inner product of both sides above with $v_1$ gives
$$-\lambda w\langle Bv_1,v_1\rangle=\lambda^2\varphi'(0)w+O(|w|^2),$$
which clearly gives
$$\langle Bv_1,v_1\rangle=-\lambda\varphi'(0)=\mu.$$
This completes the proof of the lemma in the case when $\partial F/\partial z$ is nonzero
at $(-1/\lambda,0)$. The case when
$$\frac{\partial F}{\partial z}\left(-\frac1\lambda,0\right)=0,\quad
\frac{\partial F}{\partial w}\left(-\frac1\lambda,0\right)\not=0,$$
is similar. But we will only need the case proved above.
\end{proof}

A careful examination of the proofs of the last two lemmas shows that, in Lemma~\ref{5},
the condition that the whole line $\lambda z+\mu w+1=0$ is contained in $\sigma_p(A,B)$
can be weakened to the form of Lemma~\ref{6}. Although we do not need this general result
for the proof of our main theorems, we think it is of some independent interest and will
state it as follows. The proof goes along the same lines as of Lemmas \ref{5} and \ref{6}.

\begin{lem}
Suppose $A$ and $B$ are both self-adjoint and compact. If $\sigma_p(A,B)$ contains a smooth
curve $\Gamma \subset \R^2\subset \C^2$ given by $\Gamma=\{ F(x,y)=0\}$ which passes through
the point $(-\frac{1}{\lambda},0)$, where $\lambda\not=0$, such that at least one of the
partial derivatives $\partial F/\partial x$ and $\partial F/\partial y$ does not vanish at
$(-\frac{1}{\lambda},0)$, and if the real line $\lambda x+\mu y+1=0$ is tangent to $\Gamma$ at
$(-\frac{1}{\lambda},0)$, then there exists a unit vector $v\in H$ such that $Av=\lambda v$
and $\mu=\langle Bv,v\rangle$.
\label{7}
\end{lem}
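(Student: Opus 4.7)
The plan is to hybridize Lemmas \ref{5} and \ref{6}. From Lemma~\ref{6} I borrow the idea of using the implicit function theorem to convert an arbitrary smooth curve in $\sigma_p(A,B)$ into a one-parameter analytic deformation $A_\varepsilon = A+\varepsilon B$; from Lemma~\ref{5} I borrow the orthogonal Riesz projection argument, which handles eigenvalues $\lambda$ of arbitrary multiplicity without requiring anything about $B$ beyond self-adjointness. The essential point that makes the fusion possible is that the curve $\Gamma$ lies in $\R^2$ and $F$ has real partials, so the implicit function theorem produces a \emph{real} local parametrization, the deformation parameter $\varepsilon$ stays real, and hence $A_\varepsilon$ stays self-adjoint and its Riesz projections remain orthogonal.

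Concretely, suppose $\partial F/\partial x \neq 0$ at $(-1/\lambda,0)$ (the case $\partial F/\partial y \neq 0$ is symmetric). The real implicit function theorem yields a smooth real-valued function $x=\varphi(y)$ on an interval $(-r_0,r_0)$ with $\varphi(0)=-1/\lambda$, and the tangent-line hypothesis forces $\varphi'(0)=-\mu/\lambda$. Set
$$\varepsilon(y)=\frac{y}{\varphi(y)},$$
which is smooth and real near $0$ with $\varepsilon'(0)=1/\varphi(0)=-\lambda\neq 0$, so $y\mapsto\varepsilon(y)$ is a local diffeomorphism of $\R$ at $0$. Since $(\varphi(y),y)\in\sigma_p(A,B)$, the operator $\varphi(y)A+yB+I$ has nontrivial kernel, which after division by $\varphi(y)$ becomes $A_\varepsilon-\lambda_\varepsilon I$ with
$$\lambda_\varepsilon:=-\frac{1}{\varphi(y)}.$$
A chain-rule computation then gives $d\lambda_\varepsilon/d\varepsilon(0)=-\lambda\varphi'(0)=\mu$, exactly as in (\ref{eq5}).

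From this point the argument of Lemma~\ref{5} applies almost verbatim: choose $\delta>0$ so that $\lambda$ is the only eigenvalue of $A$ in $\overline{D(\lambda,\delta)}$; for small real $\varepsilon$ form the Riesz projection $P_\varepsilon$ from (\ref{eq6}), which is orthogonal because $A_\varepsilon$ is self-adjoint, and decompose it as $P_{\varepsilon,1}\oplus\cdots\oplus P_{\varepsilon,k}$ according to the distinct eigenvalues of $A_\varepsilon$ in $D(\lambda,\delta)$, with $P_{\varepsilon,1}$ corresponding to $\lambda_\varepsilon$. Since $P_\varepsilon\to P_0$, we may extract a limit $P_1$ of $P_{\varepsilon,1}$, an orthogonal projection onto a subspace $H_1\subset H_\lambda$. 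Picking any unit $v\in H_1$ gives $Av=\lambda v$, and the perturbation expansion of $v_\varepsilon:=P_\varepsilon v$ together with $A_\varepsilon v_\varepsilon=\lambda_\varepsilon v_\varepsilon$ produces $d\lambda_\varepsilon/d\varepsilon(0)=\langle Bv,v\rangle$, as in (\ref{eq8}). Equating the two expressions for $d\lambda_\varepsilon/d\varepsilon(0)$ yields $\mu=\langle Bv,v\rangle$.

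The main obstacle I anticipate is bookkeeping rather than conceptual: one must verify that the multiplicity-handling, convergence, and series manipulations in the proof of Lemma~\ref{5} transport correctly through the nonlinear reparametrization $y\mapsto \varepsilon(y)$. Provided one phrases the perturbation expansion in the variable $\varepsilon$ (not $y$) and invokes the diffeomorphism $\varepsilon'(0)\neq 0$ only at the final chain-rule step, no new estimate is needed and the argument of Lemma~\ref{5} carries over unchanged.
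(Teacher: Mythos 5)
Your proposal is correct and follows exactly the route the paper intends: the authors do not supply a detailed proof, saying only that it ``goes along the same lines as of Lemmas \ref{5} and \ref{6},'' and your hybrid --- real implicit function theorem and the reparametrization $\varepsilon(y)=y/\varphi(y)$ from Lemma~\ref{6}, combined with the orthogonal Riesz-projection machinery of Lemma~\ref{5}, which needs $\varepsilon$ real so that $A_\varepsilon$ stays self-adjoint --- is precisely that synthesis. Your chain-rule computation $d\lambda_\varepsilon/d\varepsilon(0) = -\lambda\varphi'(0) = \mu$ is right, and since $\varepsilon'(0)=-\lambda\neq0$ the map $y\mapsto\varepsilon(y)$ is a genuine local diffeomorphism of $\R$, so the perturbation expansion in $\varepsilon$ is available for all small real $\varepsilon$. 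One small remark: the ``symmetric'' second case you mention cannot actually occur --- if $\partial F/\partial x(-1/\lambda,0)=0$ then the tangent line would be $y=0$, which cannot equal $\lambda x+\mu y+1=0$ when $\lambda\neq0$ --- so the case you treated is the only one.
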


Recall that a holomorphic curve in $\C^2$ is a nonconstant holomorphic function
$F(u)=(f(u),g(u))$ from $\C$ into $\C^2$. We will denote this curve simply by $F$. The
following result shows that if ``sufficiently many'' points on the curve $F$ belongs
to $\sigma_p(A,B)$, then the whole curve is in $\sigma_p(A,B)$.

\begin{prop}
Let $F(u)=(f(u),g(u))$ be a holomorphic curve in $\C^2$. If there exists a sequence
$\{u_k\}\subset\C$, having at least one accummulation point in $\C$, such that the
points $\{F(u_k)\}$ all belong to $\sigma_p(A,B)$, where $A$ and $B$ are compact
operators, then the whole holomorphic curve $F$ is contained in $\sigma_p(A,B)$.
\label{8}
\end{prop}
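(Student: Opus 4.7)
The plan is to view the statement as an instance of the \emph{analytic Fredholm alternative}. Define the operator-valued function
$$T(u)=I+f(u)A+g(u)B,\qquad u\in\C.$$
Since $f,g$ are holomorphic on $\C$ and $A,B$ are fixed bounded operators, $T$ is an entire $L(H)$-valued function. Because $A$ and $B$ are compact, the operator $K(u):=-f(u)A-g(u)B$ is compact for each $u$ and depends holomorphically on $u$, so $T(u)=I-K(u)$ is a holomorphic family of Fredholm operators of index $0$.

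My first step is to identify the set
$$S=\{u\in\C:T(u)\text{ is not invertible}\}.$$
By construction $u\in S$ if and only if $F(u)=(f(u),g(u))\in\sigma(A,B)$, and, since $A$ and $B$ are compact, Proposition~\ref{2} gives $\sigma(A,B)=\sigma_p(A,B)$. Hence $S=\{u\in\C:F(u)\in\sigma_p(A,B)\}$, and the hypothesis says that $S$ contains a sequence $\{u_k\}$ with an accumulation point in $\C$.

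Next I would invoke the analytic Fredholm theorem (see, for example, Theorem VI.14 in Reed--Simon or the classical formulation for $I-K(u)$ with $K(u)$ compact and holomorphic on a connected open set): for any connected open $\Omega\subset\C$ and any holomorphic family $I-K(u)$ of compact perturbations of the identity, either $I-K(u)$ is non-invertible for every $u\in\Omega$, or the set of non-invertibility is a discrete subset of $\Omega$, and $(I-K(u))^{-1}$ is meromorphic on $\Omega$. Applying this with $\Omega=\C$, the set $S$ is either discrete in $\C$ or equal to all of $\C$. Since $S$ already contains a non-discrete subsequence by hypothesis, the only possibility is $S=\C$, which is exactly the conclusion $F(u)\in\sigma_p(A,B)$ for every $u\in\C$.

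The only real obstacle is ensuring that the version of the analytic Fredholm theorem being invoked is appropriate: the family must be of the form \emph{identity minus a holomorphic compact-operator-valued function}, which is precisely the situation here because $I$ is included in $T(u)$ and $f(u)A+g(u)B$ is compact-valued and entire in $u$. Everything else in the argument is a bookkeeping step: holomorphy of $T(u)$ is immediate from holomorphy of $f,g$, compactness of $K(u)$ is preserved under scalar multiplication and finite sums, and the translation between non-invertibility and nontrivial kernel is Proposition~\ref{2}. No separate finite-dimensional reduction or determinant computation is needed.
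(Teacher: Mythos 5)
Your proof is correct and follows essentially the same route as the paper: you write $I+f(u)A+g(u)B$ as an entire family $I-K(u)$ with $K(u)$ compact-operator valued and invoke the analytic Fredholm alternative (which is precisely Theorem 5.1 in Gohberg--Krein, the reference the paper uses) to conclude that the failure set is either discrete or all of $\C$. The detour through Proposition~\ref{2} to equate non-invertibility with nontrivial kernel is harmless, though unnecessary, since for Fredholm operators of index $0$ the two are equivalent directly.
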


\begin{proof}
We consider the holomorphic, operator-valued function
$$T(u)=-[f(u)A+g(u)B],\qquad u\in\C.$$
The point $F(u)=(f(u),g(u))$ belongs to $\sigma_p(A,B)$ if and only if the solution
space of $x-T(u)x=0$, $x\in H$, is nontrivial. The desired result then follows from
Theorem 5.1 on page 21 of \cite{GK}.
\end{proof}

As a consequence of the proposition above, we see that if a nontrivial segment of the
complex line $\lambda z+\mu w+1=0$ is contained in $\sigma_p(A,B)$, then the entire
line is contained in $\sigma_p(A,B)$.

\section{Compact self-adjoint operators}

In this section we consider the case of two compact and self-adjoint operators. In this case
the main result we obtain is easy to state and the proof is easy to understand. Recall from
Proposition~\ref{2} that when $A$ and $B$ are both compact we have $\sigma(A,B)=\sigma_p(A,B)$.

The next lemma shows that the assumption $\lambda\not=0$ in Lemma~\ref{5} can be removed,
provided that $\mu$ is an eigenvalue of $B$ with maximum modulus. This is the key to
our main results.

\begin{lem}
Suppose $A$ and $B$ are both compact and self-adjoint. If the complex line
$\lambda z+\mu w+1=0$ is contained in $\sigma_p(A,B)$, where $|\mu|=\|B\|>0$, then
$\lambda$ is an eigenvalue of $A$, $\mu$ is an eigenvalue of $B$, and they share an
eigenvector.
\label{9}
\end{lem}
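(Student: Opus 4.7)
The plan is to split into two cases according to whether $\lambda=0$.

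\textbf{Case 1 ($\lambda\ne 0$).} Here Lemma~\ref{5} applies directly and gives a unit vector $x\in H$ with $Ax=\lambda x$ and $\mu=\langle Bx,x\rangle$. To upgrade the inner-product identity to $Bx=\mu x$, I would invoke the standard variational characterization for the compact self-adjoint operator $B$: expanding $\langle Bx,x\rangle$ via the spectral decomposition of $B$ shows it is a convex combination of the eigenvalues of $B$ weighted by the squared norms of the corresponding spectral components of $x$. Since $\mu$ is an extreme eigenvalue in modulus (it attains $\|B\|$), this convex combination can equal $\mu$ only if all the weight is concentrated on the $\mu$-eigenspace, forcing $x$ itself to lie in $\ker(B-\mu I)$.

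\textbf{Case 2 ($\lambda=0$).} This is where Lemma~\ref{5} does not apply and a new idea is needed. The hypothesis now says that the entire complex line $\{(z,-1/\mu):z\in\C\}$ lies in $\sigma_p(A,B)$; in particular, for every $z\in\C$ there is a unit vector $x_z\in H$ with
$$\bigl(zA-(1/\mu)B+I\bigr)x_z=0,\qquad\text{i.e.,}\qquad Bx_z=\mu(x_z+zAx_z).$$
I would plug in the purely imaginary value $z=i$. Because $A$ is self-adjoint, $\langle Ax_i,x_i\rangle$ is real, so the two conjugate cross terms in the expansion of $\|x_i+iAx_i\|^2$ cancel and the identity collapses to $\|x_i+iAx_i\|^2=1+\|Ax_i\|^2$. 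Consequently
$$\|Bx_i\|^2=|\mu|^2\bigl(1+\|Ax_i\|^2\bigr)\ge|\mu|^2.$$
Combined with the bound $\|Bx_i\|\le\|B\|=|\mu|$ coming from the extremality hypothesis, this forces $\|Ax_i\|=0$, so $Ax_i=0$ and then $Bx_i=\mu x_i$. Thus $x_i$ is the required common unit eigenvector, witnessing simultaneously that $\lambda=0$ is an eigenvalue of $A$ and $\mu$ an eigenvalue of $B$.

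The main obstacle is clearly Case~2, since the previous lemmas all required $\lambda\ne 0$. The two ingredients that make the short argument work are (i) the self-adjointness of $A$, which kills the cross term in $\|x_i+iAx_i\|^2$ precisely because $z=i$ is purely imaginary; and (ii) the extremality hypothesis $|\mu|=\|B\|$, which squeezes the inequality $\|Bx_i\|\le|\mu|$ against the identity $\|Bx_i\|=|\mu|\sqrt{1+\|Ax_i\|^2}$ to force $Ax_i=0$. Without either ingredient the common eigenvector would not drop out.
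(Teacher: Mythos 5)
Your two cases line up with the paper's, and Case 1 is essentially the same: Lemma~\ref{5} produces the eigenvector for $A$ and the extremality $|\mu|=\|B\|$ upgrades $\mu=\langle Bx,x\rangle$ to $Bx=\mu x$ (the paper does this via equality in Cauchy--Schwarz; your spectral-decomposition version is an equivalent formulation of the same fact). Your Case~2, however, is genuinely different and notably simpler. The paper handles $\lambda=0$ by a linear change of variables: it observes that the line $\mu z+\mu w+1=0$ lies in $\sigma_p(A+B,B)$, then feeds this into Lemma~\ref{5} (which rests on Riesz projections and analytic perturbation of eigenvalues) to produce a vector with $(A+B)x=\mu x$ and $\langle Bx,x\rangle=\mu$, whence $Bx=\mu x$ and $Ax=0$. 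Your argument bypasses Lemma~\ref{5} entirely: from a kernel vector $x_i$ of $I+iA-\mu^{-1}B$ you get $Bx_i=\mu(x_i+iAx_i)$, and since $A$ is self-adjoint and $z=i$ is purely imaginary the cross terms in $\|x_i+iAx_i\|^2$ cancel, giving $\|Bx_i\|^2=|\mu|^2(1+\|Ax_i\|^2)$; squeezing against $\|Bx_i\|\le\|B\|=|\mu|$ forces $Ax_i=0$ and $Bx_i=\mu x_i$. This is correct. What the paper's route buys is uniformity with the $\lambda\ne0$ case (everything funnels through Lemma~\ref{5}); what your route buys is elementarity — it needs no perturbation theory and exposes exactly where self-adjointness of $A$ and extremality of $\mu$ enter.
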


\begin{proof}
Note that if $|\mu|=\|B\|>0$, then the condition $\mu=\langle Bx,x\rangle$ with
$\|x\|=1$ is equivalent to $Bx=\mu x$. This follows easily from the Cauchy Schwarz
inequality and the fact that equality holds in the Cauchy-Schwarz inequality if
and only if the two vectors are linearly dependent.

The case $\lambda\not=0$ follows from Lemma~\ref{5}.

Suppose $\lambda=0$ and the complex line $\lambda z+\mu w+1=0$ is contained
in $\sigma_p(A,B)$. Then $(z,-1/\mu)\in\sigma(A,B)$ for every $z\in\C$. In other
words, the operator
$$I+zA-\frac1\mu\,B$$
has a nontrivial kernel for every $z\in\C$. If $(z,w)$ satisfies $\mu z+\mu w+1=0$,
then $w=-(1+\mu z)/\mu$ and
\begin{eqnarray*}
I+z(A+B)+wB&=&I+z(A+B)-\frac{1+\mu z}\mu\,B\\
&=&I+zA-\frac1\mu\,B,
\end{eqnarray*}
which has a nontrivial kernel. This shows that the complex line
$$\mu z+\mu w+1=0$$
is contained in $\sigma_p(A+B,B)$. By Lemma \ref{5}, there exists a nonzero
vector $x\in H$ such that
$$\mu=\langle Bx,x\rangle,\quad (A+B)x=\mu x.$$
The assumption $|\mu|=\|B\|$ along with $\mu=\langle Bx,x\rangle$ implies that
$Bx=\mu x$ and so $Ax=0$. This shows that $\lambda=0$ is an eigenvalue of $A$,
and the eigenvector $x$ is shared by $A$ and $B$.
\end{proof}

The following result is well known, but we include a proof here for
the sake of completeness.

\begin{lem}
Suppose $A$ and $B$ are both compact and normal on $H$. Then $AB=BA$ if and only if
they can be diagonalized under the same orthonormal basis.
\label{10}
\end{lem}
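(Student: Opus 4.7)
My plan is to prove this via the spectral theorem for compact normal operators applied twice: once to $A$, and then to the restriction of $B$ to each eigenspace of $A$. The ``if'' direction is immediate, since two operators that are diagonal with respect to a common orthonormal basis obviously agree with each other's compositions on the basis vectors, hence commute by linearity and continuity. So I focus on the ``only if'' direction and assume $AB = BA$.

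First I would invoke the compact normal spectral theorem on $A$ to decompose $H$ orthogonally as $H = \bigoplus_\lambda H_\lambda$, where $H_\lambda = \ker(A - \lambda I)$ is finite-dimensional for $\lambda \neq 0$ (while $H_0 = \ker A$ may be infinite-dimensional), and where $\lambda$ ranges over the at most countable set of eigenvalues of $A$. The crucial step is to show that each $H_\lambda$ is a reducing subspace for $B$, i.e.\ invariant under both $B$ and $B^*$. Invariance under $B$ is trivial from $AB = BA$: if $Ax = \lambda x$ then $A(Bx) = B(Ax) = \lambda (Bx)$. The main obstacle is establishing $B^*$-invariance, and here I would realize the orthogonal projection $P_\lambda$ onto $H_\lambda$ as a Riesz projection $P_\lambda = \frac{1}{2\pi i}\oint_\gamma (uI - A)^{-1}\,du$ for a small contour $\gamma$ around $\lambda$ enclosing no other point of $\sigma(A)$. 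Since $B$ commutes with $A$ it commutes with each resolvent $(uI - A)^{-1}$, hence with $P_\lambda$; taking adjoints and using that $P_\lambda = P_\lambda^*$ (because $A$ is normal, so $P_\lambda$ is an orthogonal projection), we obtain $B^* P_\lambda = P_\lambda B^*$, so $B^*(H_\lambda) \subseteq H_\lambda$. One could equivalently cite Fuglede's theorem, but the Riesz projection argument stays within the tools already used in this paper.

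Once each $H_\lambda$ is reducing for $B$, the restriction $B_\lambda = B|_{H_\lambda}$ is a compact normal operator on $H_\lambda$ (compactness is inherited, and the reducing property ensures $(B_\lambda)^* = B^*|_{H_\lambda}$ so that normality of $B$ descends). For $\lambda \neq 0$, $H_\lambda$ is finite-dimensional and the spectral theorem for normal matrices produces an orthonormal basis of $H_\lambda$ diagonalizing $B_\lambda$; for $\lambda = 0$, $B_0$ is a compact normal operator on the Hilbert space $H_0$, and the compact normal spectral theorem again yields an orthonormal basis of $H_0$ diagonalizing $B_0$. Since $A$ acts as the scalar $\lambda I$ on $H_\lambda$, any orthonormal basis of $H_\lambda$ also diagonalizes $A|_{H_\lambda}$. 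The union of these bases over all $\lambda$ is an orthonormal basis of $H$ simultaneously diagonalizing $A$ and $B$, completing the proof.
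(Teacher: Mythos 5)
Your overall strategy is the same as the paper's: decompose $H$ into eigenspaces of $A$, show each is reducing for $B$, restrict $B$ to each piece, and diagonalize again. The paper simply cites the reducing-subspace claim as well known (from Dunford--Schwartz), whereas you attempt to prove it. That extra care is welcome, but your chosen proof has a gap precisely where it matters for compact operators.

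The Riesz-projection argument requires a contour $\gamma$ around $\lambda$ that isolates $\lambda$ from the rest of $\sigma(A)$. For a compact operator on an infinite-dimensional $H$, the nonzero eigenvalues typically accumulate at $0$, so no such contour exists around $\lambda=0$, and $P_0$ cannot be realized as $\frac{1}{2\pi i}\oint_\gamma (uI-A)^{-1}\,du$. Your argument therefore establishes reducibility of $H_\lambda$ only for $\lambda\neq 0$, while the case $H_0=\ker A$ (which you explicitly include in your decomposition, and which you need) is left unproved. You could patch this by observing that $\ker A$ is the orthogonal complement of the closed span of the $H_\lambda$, $\lambda\neq 0$, each of which you have shown to be reducing, hence their orthogonal complement is reducing too. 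But a cleaner route that avoids contour integrals entirely and works uniformly in $\lambda$ is this: since $A$ is normal, $\ker(A-\lambda I)=\ker(A^*-\bar\lambda I)$; and taking adjoints of $AB=BA$ gives $A^*B^*=B^*A^*$. Hence for $x\in H_\lambda$ one has $A^*(B^*x)=B^*(A^*x)=\bar\lambda\,B^*x$, so $B^*x\in\ker(A^*-\bar\lambda I)=H_\lambda$. This shows every $H_\lambda$ (including $H_0$) is $B^*$-invariant, with no isolation hypothesis needed. With that replacement your proof is complete and tracks the paper's.
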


\begin{proof}
If $A$ and $B$ are simultaneously diagonalizable by the same unitary operator,
it is obvious that $A$ and $B$ will commute.

To prove the other direction, we write $A=\sum_{k=0}^\infty\lambda_kP_k$, where
$\{\lambda_k\}$ is the sequence of distinct eigenvalues of $A$ and $\{P_k\}$ is
the sequence of spectral projections (orthogonal projections onto the corresponding
eigenspaces $E_k$); see \cite{Z}. It is well known (see \cite{DS} for example) from
the spectral theory for normal operators that $AB=BA$ if and only if $P_kB=BP_k$
for every $k$, or equivalently, every $E_k$ is a reducing subspace for $B$. So if
$A$ and $B$ commute, then under the same direct decomposition
$$H=E_0\oplus E_1\oplus E_2\oplus\cdots,$$
we have
$$A=\lambda_0I_0\oplus\lambda_1I_1\oplus\lambda_2I_2\oplus\cdots,$$
and
$$B=B_0\oplus B_1\oplus B_2\oplus\cdots,$$
where each $I_k$ is the identity operator on $E_k$ and each $B_k$ is normal on $E_k$.
Now for each $k$ choose a unitary operator $U_k$ to diagonalize $B_k$. Then the unitary
operator
$$U=U_0\oplus U_0\oplus U_1\oplus U_2\oplus\cdots$$
will diagonalize $A$ and $B$ simultaneously.
\end{proof}

We can now prove the main result of this section.

\begin{thm}
Suppose $A$ and $B$ are both compact and self-adjoint. Then $AB=BA$ if and only if
$\sigma_p(A,B)$ consists of countably many, locally finite, complex lines
$\lambda_kz+\mu_kw+1=0$.
\label{11}
\end{thm}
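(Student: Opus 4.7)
The plan is to prove the two directions separately. For the forward direction, assume $AB=BA$. Lemma~\ref{10} yields an orthonormal basis $\{e_k\}$ of $H$ with $Ae_k=\lambda_k e_k$ and $Be_k=\mu_k e_k$ simultaneously. Expanding any $x=\sum x_k e_k$, the vector $(I+zA+wB)x$ vanishes if and only if $(1+\lambda_k z+\mu_k w)x_k=0$ for each $k$, so
$$\sigma_p(A,B)=\bigcup_{k}\{\lambda_k z+\mu_k w+1=0\},$$
where any index with $\lambda_k=\mu_k=0$ contributes nothing. Countability is immediate, and since $(\lambda_k,\mu_k)\to(0,0)$ the line $\lambda_k z+\mu_k w+1=0$ can meet a given compact subset of $\C^2$ only when $|\lambda_k|+|\mu_k|$ is bounded below by a positive constant, yielding local finiteness.

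For the reverse direction, assume $\sigma_p(A,B)=\bigcup_k L_k$ is a countable, locally finite union of complex lines $L_k=\{\lambda_k z+\mu_k w+1=0\}$. I will prove that $H$ admits an orthonormal basis of common eigenvectors of $A$ and $B$, which forces $AB=BA$ by a density argument. By Zorn's Lemma, pick a maximal orthonormal family $\{e_\alpha\}$ of common eigenvectors and set $M=\overline{\mathrm{span}}\,\{e_\alpha\}$. Both $M$ and $M^\perp$ are invariant under $A$ and $B$ by self-adjointness, so the restrictions $A':=A|_{M^\perp}$ and $B':=B|_{M^\perp}$ are compact and self-adjoint on $M^\perp$. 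Assume toward contradiction $M\neq H$. If $A'=B'=0$, every unit vector of $M^\perp$ is a common eigenvector and may be adjoined to $\{e_\alpha\}$, contradicting maximality; so we may assume $\|B'\|>0$ and pick an eigenvalue $\mu_0$ of $B'$ with $|\mu_0|=\|B'\|\neq0$, whence $(0,-1/\mu_0)\in\sigma_p(A',B')\subset\sigma_p(A,B)$.

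The heart of the argument is to show that $\sigma_p(A',B')$ contains a full complex line of the form $\lambda_0 z+\mu_0 w+1=0$. By the analytic set result cited just before Lemma~\ref{6}, $\sigma_p(A',B')$ is an analytic subset of $\C^2$ of pure codimension one. Let $C$ be its irreducible component through $(0,-1/\mu_0)$; then $C$ is an irreducible analytic curve sitting inside the locally finite union $\bigcup_k L_k$. Near each point of $C$ only finitely many $L_k$ are present, and two distinct complex lines in $\C^2$ intersect in at most one point; irreducibility therefore forces $C$ to coincide locally with a single line $L_{k_0}$, and then globally by analytic continuation. Evaluating $L_{k_0}$ at $(0,-1/\mu_0)$ pins down its $w$-coefficient as $\mu_0$, so $L_{k_0}$ has the form $\lambda_0 z+\mu_0 w+1=0$.

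Now apply Lemma~\ref{9} to the pair $(A',B')$ on $M^\perp$: the line $\lambda_0 z+\mu_0 w+1=0$ lies in $\sigma_p(A',B')$ and $|\mu_0|=\|B'\|$, so there is a unit vector $x\in M^\perp$ with $A'x=\lambda_0 x$ and $B'x=\mu_0 x$. Adjoining $x$ to $\{e_\alpha\}$ contradicts maximality and finishes the proof. The main obstacle is the geometric step of the previous paragraph: transferring the hypothesis ``$\sigma_p$ is a union of lines'' from $H$ to the invariant subspace $M^\perp$. It requires simultaneously the codimension-one analytic structure of $\sigma_p(A',B')$ and the locally finite line structure of $\sigma_p(A,B)$, and it is what converts the inductive peeling-off of common eigenvectors, made possible by Lemma~\ref{9}, into a rigorous argument.
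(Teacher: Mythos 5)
Your proof is correct, and the reverse direction takes a genuinely different route from the paper's. The forward direction is the same simultaneous-diagonalization computation. For the reverse direction, the paper runs an explicit inductive ``peeling'' construction: it repeatedly splits $H$ into a finite-dimensional common eigenspace and its orthocomplement, proves that the joint point spectrum of the restricted pair is \emph{again} a countable locally finite union of lines (this step is where spectral continuity and Proposition~\ref{8} are invoked), and then passes to a limit of decompositions $H = X_n \oplus Y_n$ with $\|A_n\|, \|B_n\| \to 0$. You instead set up a Zorn's Lemma argument with a maximal orthonormal family of common eigenvectors and derive a contradiction from $M \neq H$, so you never need to re-establish the ``union of lines'' hypothesis for the restricted pair $(A',B')$; all you need is \emph{one} irreducible component of $\sigma_p(A',B')$ through $(0,-1/\mu_0)$, and you get it to be a full line via the codimension-one analyticity of $\sigma_p(A',B')$ from \cite{SYZ} together with the observation that an irreducible analytic curve inside a locally finite union of complex lines must coincide with one of them. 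This is a trade: the paper's route is more elementary (no Zorn, no appeal to irreducible-component structure of analytic sets) but must track how the linear structure of $\sigma_p$ is inherited by the restrictions and then justify a limit; your route sidesteps both of those at the cost of a bit more complex-analytic machinery. Both hinge on Lemma~\ref{9} as the engine that converts a line in the joint point spectrum into a shared eigenvector. The only point I would flag as worth spelling out is the assertion that $C$ coincides locally, hence globally, with a single $L_{k_0}$: choose a point $p \in C$ that avoids the discrete set of singular points of $C$ and the countable set $\bigcup_{j\neq k} (L_j \cap L_k)$; then near $p$ one has $C \subset L_{k_0}$ for the unique line through $p$, so $C \cap L_{k_0}$ is a $1$-dimensional analytic subset of the irreducible $1$-dimensional set $C$, forcing $C \subset L_{k_0}$ and hence $C = L_{k_0}$. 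As written the claim is plausible but compressed.
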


\begin{proof}
First assume that $AB=BA$. By Lemma~\ref{10}, there exists an orthonormal basis
$\{e_n\}$ of $H$ which simultaneously diagonalizes $A$ and $B$, say
$$A=\sum_{n=1}^\infty\lambda_ne_n\otimes e_n,\quad
B=\sum_{n=1}^\infty\mu_ne_n\otimes e_n.$$
It follows that
$$I+zA+wB=\sum_{n=1}^\infty(1+\lambda_nz+\mu_nw)e_n\otimes e_n,$$
which is invertible if and only if $\lambda_nz+\mu_nw+1\not=0$ for every $n$. This
shows that
$$\sigma_p(A,B)=\bigcup_{n=1}^\infty\left\{(z,w):\lambda_nz+\mu_nw+1=0\right\}.$$
In other words, the joint point spectrum $\sigma_p(A,B)$ is the union of countably many
complex lines. It is easy to check that these complex lines are locally finite.

Next assume that $\sigma_p(A,B)$ consists of a countable number of complex lines which
are locally finite. We start with an eigenvalue of maximum modulus for $B$, say $\mu_1$
with $\|B\|=|\mu_1|$. The point $(0,-1/\mu_1)$ belongs to $\sigma(A,B)$, because the
operator
$$I+0A-\frac1{\mu_1}\,B=\frac1{\mu_1}(\mu_1I-B)$$
has a nontrivial kernel. Since $\sigma_p(A,B)$ consists of a bunch of complex lines, we
can find a complex line $\lambda z+\mu w+1=0$ that is contained in $\sigma_p(A,B)$ and
contains the point $(0,-1/\mu_1)$. It is then clear that $\mu=\mu_1$, so the complex
line $\lambda z+\mu_1w+1=0$ is contained in $\sigma_p(A,B)$.

By Lemma~\ref{9}, $\lambda$ is an eigenvalue of $A$. Furthermore, there exists a
nontrivial subspace $E$ of $\ker(\lambda I-A)$ such that $Bx=\mu_1x$ for $x\in E$.
Let $H=E\oplus H_1$ and
$$A=\lambda I\oplus A_1,\qquad B=\mu_1 I\oplus B_1,$$
be the corresponding decompositions.

Switch to the new pair $(A_1,B_1)$, whose joint point spectrum $\sigma_p(A_1,B_1)$
is contained in $\sigma_p(A,B)$. We claim that $\sigma_p(A_1,B_1)$ is still the union
of countably many, locally finite, complex lines. To see this, suppose that
a point $(z_0,w_0)\in \sigma_p(A_1,B_1)$ belongs to the complex line $\lambda z+\mu w+1=0$
which is contained in $\sigma_p(A,B)$ and to no other line in $\sigma_p(A,B)$. Because of
the local finiteness there is some $\delta>0$ such that
the intersection of $\sigma_p(A,B)$ with
$$D_\delta=\{(z,w)\in\C^2: |(z,w)-(z_0,w_0)|<\delta\}$$
is contained in the complex line $\lambda z+\mu w+1=0$. By spectral continuity, there is
some $\varepsilon >0$ such that for
$$|(z_1,w_1)-(z_0,w_0)|<\varepsilon$$
there is an eigenvalue $\tau$ of $z_1A+w_1B$ satisfying $|1+\tau|<\delta$. This implies that $$(\frac{z_1}{\tau},\frac{w_1}{\tau})\in\sigma_p(A_1,B_1)\subset \sigma_p(A,B).$$
Thus,
$$(\frac{z_1}{\tau},\frac{w_1}{\tau})\in \{ \lambda z + \mu w+1=0 \}.$$
In particular, this implies that
$$D_\delta \cap \{\lambda z+ \mu w+1 =0\} \subset \sigma_p(A_1,B_1).$$
By Proposition \ref{8}, the whole line $\lambda z+\mu w+1=0$ is contained in
$\sigma_p(A_1,B_1)$. Therefore, $\sigma_p(A_1,B_1)$ is still the union of countably many,
locally finite, complex lines. Now start with an eigenvalue of $A_1$ with maximum modulus
and repeat the above process to get a new pair $(A_2,B_2)$. Continuing this process in an
alternating way, we arrive at a sequence of decompositions
$$H=X_n\oplus Y_n,\quad A=T_n+A_n,\quad B=S_n+B_n,$$
where
$$T_nS_n=S_nT_n,\quad \|A_{n}\|\to0,\quad\|B_n\|\to0,$$
as $n\to\infty$. Let $n\to\infty$. The result is $AB=BA$.
\end{proof}

It is just a simple step to generalize the theorem above to the case of more than
two operators.

\begin{thm}
Suppose $\bA=\{A_1,A_2,\cdots,A_n\}$ is a tuple of compact and self-adjoint operators
on a Hilbert space $H$. Then $A_iA_j=A_jA_i$ for all $i$ and $j$ if and only if
$\sigma_p(\bA)$ is the union of countably many, locally finite, complex hyperplanes
$\lambda_{1k}z_1+\lambda_{2k}z_2+\cdots+\lambda_{nk}z_n+1=0$, $k\ge1$.
\label{12}
\end{thm}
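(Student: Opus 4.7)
The plan is to derive both directions from machinery already in hand: the forward direction from simultaneous diagonalization of commuting compact self-adjoint operators, and the reverse direction from a coordinate-slice reduction to Theorem~\ref{11}.

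For the forward direction, suppose the operators pairwise commute. Iterating Lemma~\ref{10} (or invoking the standard fact that a commuting family of compact normal operators admits a simultaneous diagonalization), I would choose an orthonormal basis $\{e_m\}$ of $H$ with $A_ie_m=\lambda_{im}e_m$ for every $i$ and $m$. Then
$$I+\sum_{i=1}^nz_iA_i=\sum_{m=1}^\infty\Bigl(1+\sum_{i=1}^n\lambda_{im}z_i\Bigr)e_m\otimes e_m,$$
which fails to be invertible exactly when some factor vanishes. Hence $\sigma_p(\bA)=\bigcup_mH_m$ with $H_m=\{\lambda_{1m}z_1+\cdots+\lambda_{nm}z_n+1=0\}$. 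Local finiteness follows from eigenvalue decay: $\lambda_{im}\to0$ as $m\to\infty$ for each $i$, so given any $z_0\in\cn$ and a small bounded neighborhood $U$ of $z_0$, there exists $M$ with $|\sum_i\lambda_{im}z_i|<1/2$ for all $z\in U$ and all $m\ge M$, forcing $H_m\cap U=\emptyset$ for such $m$.

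For the reverse direction, assume $\sigma_p(\bA)$ is a union of countably many, locally finite, complex hyperplanes. The strategy is to slice down to pairs of variables. For each pair of indices $i\neq j$, let $L_{ij}\subset\cn$ denote the 2-dimensional linear subspace $\{z_k=0:k\notin\{i,j\}\}$, identified with $\C^2$ via the coordinates $(z_i,z_j)$. Directly from the definitions,
$$\sigma_p(\bA)\cap L_{ij}=\sigma_p(A_i,A_j).$$
Each hyperplane $\{\sum_l\lambda_lz_l+1=0\}$ in the given decomposition meets $L_{ij}$ either in the complex line $\{\lambda_iz_i+\lambda_jz_j+1=0\}$ (when $(\lambda_i,\lambda_j)\neq(0,0)$) or in the empty set (when both $\lambda_i$ and $\lambda_j$ vanish, since the equation reduces to $1=0$). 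Thus $\sigma_p(A_i,A_j)$ is a union of countably many complex lines in $\C^2$, and local finiteness transfers since any neighborhood in $L_{ij}$ is the trace of a neighborhood in $\cn$. Theorem~\ref{11}, applied to the compact self-adjoint pair $(A_i,A_j)$, then yields $A_iA_j=A_jA_i$.

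No genuinely new analytic obstacle appears: the only bookkeeping is tracking local finiteness through the restriction to $L_{ij}$ and through the eigenvalue decay of the $A_i$. The substantive analytic work has already been absorbed into Theorem~\ref{11}; the $n$-variable case reduces to it by slicing along all $\binom{n}{2}$ coordinate 2-planes.
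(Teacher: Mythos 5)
Your argument coincides with the paper's own proof in both structure and substance: the forward direction uses simultaneous diagonalization of the commuting family (Lemma~\ref{10}) to write $\sigma_p(\bA)$ as a union of hyperplanes, and the reverse direction slices $\sigma_p(\bA)$ along coordinate 2-planes to obtain $\sigma_p(A_i,A_j)$, then invokes Theorem~\ref{11}. The only difference is that you spell out the bookkeeping (local finiteness from eigenvalue decay, how hyperplanes restrict to lines or the empty set) slightly more carefully than the paper does.
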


\begin{proof}
If the operators in $\bA$ pairwise commute, then it follows from the proof of
Lemma~\ref{10} that these operators can be diagonalized simultaneously using the same
orthonormal basis $\{e_k\}$:
$$A_j=\sum_{k=1}^\infty\lambda_{jk} e_k\otimes e_k,\qquad 1\le j\le n.$$
It follows that
$$z_1A_1+z_2A_2+\cdots+z_nA_n+I=\sum_{k=1}^\infty(\lambda_{1k}z_1+\lambda_{2k}z_2+
\cdots+\lambda_{nk}z_n+1)e_k\otimes e_k,$$
which is invertible if and only if
$$\lambda_{1k}z_1+\lambda_{2k}z_2+\cdots+\lambda_{nk}z_n+1\not=0,\qquad k\ge1.$$
Therefore,
$$\sigma_p(\bA)=\bigcup_{k=1}^\infty\{(z_1,\cdots,z_n)\in\cn:
\lambda_{1k}z_1+\lambda_{2k}z_2+\cdots+\lambda_{nk}z_n+1=0\}.$$
It is easy to check that these complex hyperplanes are locally finite in $\cn$.

On the other hand, if $\sigma_p(\bA)$ consists of countably many, locally finite,
complex hyperplanes in $\cn$, then for any fixed $1\le i<j\le n$, the joint point
spectrum $\sigma_p(A_i,A_j)$, which is equal to
$$\{(z_i,z_j)\in\C^2:(z_1,z_2,\cdots,z_n)\in\sigma_p(\bA),
z_l=0, l\not=i, l\not=j\},$$
consists of countably many, locally finite, complex lines in $\C^2$. By
Theorem~\ref{11}, we have $A_iA_j=A_jA_i$.
\end{proof}

\section{Normal matrices}

The most important tool in the study of a single matrix $A$ is probably its characteristic
polynomial
$$p(\lambda)=\det(\lambda I-A),$$
where $I$ is the identity matrix. To study several matrices ${\mathbb A}=\{A_1,\cdots,A_n\}$
of the same size, it is thus natural to consider the following polynomial:
$$p_{\mathbb A}(z_1,\cdots,z_n)=\det(I+z_1A_1+\cdots+z_nA_n).$$
We still call $p_{\mathbb A}$ the characteristic polynomial of $\{A_1,\cdots,A_n\}$.

The classical characteristic polynomial of an $N\times N$ matrix $A$ is always a polynomial
of degree $N$. However, the degree of $p_{\mathbb A}$ is not necessarily $N$; it is
always less than or equal to $N$.

Recall that a matrix $A$ is normal if $AA^*=A^*A$. Here $A^*$ means the transpose of the
complex conjugate of $A$. It is well known that $A$ is normal if and only if $A$ is
diagonalizable by a unitary matrix. Two normal matrices are not necessarily diagonalizable
by the same unitary matrix, so their commutativity is an interesting and nontrivial problem.
In this section we characterize the commutativity of an $n$-tuple of normal matrices based
on their joint spectrum and on the reducibility of their characteristic polynomial.

Our first step is to show that for matrices two of the assumptions in Lemma~\ref{6} can be
dropped. Recall that an eigenvalue of an operator is called simple if its eigenspace is one
dimensional. The terms ``simple eigenvalue'' and ``eigenvalue of multiplicity one'' mean
the same thing.

\begin{lem}
Suppose $A$ and $B$ are $N\times N$ matrices. If $A$ is normal and $\lambda$ is a simple
nonzero eigenvalue of $A$, then $(-1/\lambda,0)$ is a regular point of the algebraic set
$\sigma_p(A,B)$.
\label{13}
\end{lem}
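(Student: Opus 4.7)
The plan is to use the characteristic polynomial $p(z,w)=\det(I+zA+wB)$ itself as a local defining function near $(-1/\lambda,0)$ and to show directly that $\partial p/\partial z$ does not vanish there. A single computation will do two jobs at once: it will verify that $p$ is a reduced local defining function (i.e., has multiplicity one as a divisor at the point), so that the criterion recalled in the excerpt — singular iff the differential vanishes — is actually applicable; and it will exhibit, via the implicit function theorem, $\sigma_p(A,B)=\{p=0\}$ as a smooth complex curve near $(-1/\lambda,0)$.

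For the key step, I would restrict $p$ to the coordinate line $\{w=0\}$ and compute
$$p(z,0)=\det(I+zA)=\prod_{k=1}^{N}(1+z\mu_k),$$
where $\mu_1,\dots,\mu_N$ are the eigenvalues of $A$ repeated according to algebraic multiplicity. Normality of $A$ enters here and only here: it forces algebraic and geometric multiplicities to coincide, so the hypothesis that $\lambda$ is a simple (geometric) eigenvalue guarantees $\lambda$ appears exactly once among the $\mu_k$. Differentiating in $z$ and evaluating at $z=-1/\lambda$ then kills every term except the one coming from the unique factor $(1+z\lambda)$, giving
$$\frac{\partial p}{\partial z}\Big|_{(-1/\lambda,0)}=\lambda\prod_{\mu_k\ne\lambda}\Big(1-\frac{\mu_k}{\lambda}\Big)\ne 0,$$
since $\lambda\ne 0$ and $\mu_k\ne\lambda$ for all remaining indices.

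To assemble the conclusion, note that the nonvanishing of $\partial p/\partial z$ at $(-1/\lambda,0)$ means that $z\mapsto p(z,0)$ has a simple zero at $z=-1/\lambda$; by (a one-variable application of) the Weierstrass preparation theorem in the shifted variable $\zeta=z+1/\lambda$, $p$ factors locally as a unit times a distinguished polynomial of degree one in $\zeta$, hence is irreducible and reduced in the local ring at $(-1/\lambda,0)$. Therefore $p$ is a legitimate (reduced) local defining function, the singular/regular criterion from the excerpt applies to $F=p$, and $dp\ne 0$ at our point forces regularity.

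I do not anticipate a genuine obstacle: the argument is essentially a one-line derivative computation. The only subtlety worth flagging is the need to ensure that $p$ is reduced at $(-1/\lambda,0)$ before invoking the "both partials vanish" characterization of singularity — but this is automatic from the very same nonvanishing partial derivative that proves regularity, so both pieces fall out of the single calculation above.
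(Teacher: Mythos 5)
Your proof is correct and takes essentially the same route as the paper's: both detect regularity by computing $\partial p/\partial z$ at $(-1/\lambda,0)$ from the factorization of $\det(I+zA)$, with normality used exactly once, to convert the hypothesis that $\lambda$ is geometrically simple into the statement that it has algebraic multiplicity one. Your Weierstrass-preparation remark, that the same nonvanishing partial already forces $p$ to be reduced at the point, is a careful refinement that the paper leaves implicit but does not change the argument.
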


\begin{proof}
Recall that $\sigma_p(A,B)$ is the zero variety of the polynomial
$$f(z,w)=\det(zA+wB+I),\qquad (z,w)\in\C^2.$$
If $\lambda$ is a nonzero eigenvalue of $A$, then it is clear that the point $P=(-1/\lambda,0)$
belongs to $\sigma_p(A,B)$. Since $A$ is normal, the algebraic and geometric multiplicities
of $\lambda$ are the same. Thus the characteristic polynomial of $A$ admits the factorization
$$\det(uI-A)=(u-\lambda)g(u),\qquad g(\lambda)\not=0.$$
From this we easily deduce that
$$\frac{\partial f}{\partial z}(P)=\left.\frac{d}{dz}\det(zA+I)\right|_{z=-\frac1\lambda}
\not=0,$$
so $P$ is a regular point of $\sigma_p(A,B)$.
\end{proof}

\begin{lem}
Let $A$ and $B$ be $N\times N$ matrices. If $A$ is normal and the complex line
$\lambda z+\mu w+1=0$ is contained in $\sigma_p(A,B)$, then there exists a unit vector
$x\in\C^N$ such that $Ax=\lambda x$ and $\mu=\langle Bx,x\rangle$.
\label{14}
\end{lem}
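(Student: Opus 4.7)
The plan is to reduce the lemma to a matrix-algebra computation on the compression of $B$ to the $\lambda$-eigenspace of $A$, exploiting the determinantal identity $\det(I+zA+wB)\equiv 0$ along the line together with the orthogonal decomposition of $\C^N$ furnished by the normality of $A$. The main tool is the Schur complement applied block-wise.

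I would begin by writing $\C^N=H_\lambda\oplus H_\lambda^\perp$ with $H_\lambda=\ker(\lambda I-A)$ of dimension $m\ge 1$, so that $A=\lambda I_{H_\lambda}\oplus A'$, where $A'$ has no eigenvalue equal to $\lambda$. Decompose
$$B=\begin{pmatrix} B_{11} & B_{12}\\ B_{21} & B_{22}\end{pmatrix}$$
relative to this splitting. It then suffices to show that $\mu$ is an eigenvalue of $B_{11}$, since any associated unit eigenvector $x\in H_\lambda$ satisfies $Ax=\lambda x$ and $\langle Bx,x\rangle=\langle B_{11}x,x\rangle=\mu$.

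For $\lambda\neq 0$, I parametrize the line by $z=-(1+\mu w)/\lambda$; then $1+z\lambda=-\mu w$, so the $(1,1)$ block of $I+zA+wB$ collapses to $w(B_{11}-\mu I)$, while the $(2,2)$ block $I-(1+\mu w)A'/\lambda+wB_{22}$ reduces to $I-A'/\lambda$ at $w=0$ and is therefore invertible for all small $w$. Factoring out $w$ from the first row of blocks and applying the Schur complement yields
$$\det(I+zA+wB)=\det(M_{22})\cdot w^{\,m}\,\det\bigl((B_{11}-\mu I)-wB_{12}M_{22}^{-1}B_{21}\bigr).$$
The hypothesis forces the left-hand side to vanish identically in $w$, so the last determinant must vanish identically; evaluating at $w=0$ gives $\det(B_{11}-\mu I)=0$, producing the required eigenvector.

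The case $\lambda=0$ (so $\mu\neq 0$) is symmetric: the line becomes $w=-1/\mu$ with $z$ free, and the $(2,2)$ block $I+zA'-B_{22}/\mu$ is invertible for all sufficiently large $|z|$ because $A'$ is invertible when $\lambda=0$. The Schur complement of the invertible $(2,2)$ block, together with $M_{22}^{-1}=O(1/z)$ as $z\to\infty$, forces $\det(I-B_{11}/\mu)=0$ in the limit, again giving the eigenvalue $\mu$ for $B_{11}$. The main obstacle is that Lemmas \ref{6} and \ref{13} only cover the case of a simple nonzero eigenvalue, so the genuine content of Lemma \ref{14} is the higher-multiplicity case; the Schur-complement approach sidesteps the regularity analysis of $\sigma_p(A,B)$ at $(-1/\lambda,0)$ used in Lemma \ref{6} and treats both cases uniformly, using only that normality of $A$ guarantees the complementary block $A'$ avoids the eigenvalue $\lambda$.
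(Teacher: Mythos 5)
Your proof is correct and takes a genuinely different route from the paper's. The paper handles the higher-multiplicity case by perturbing $A$ to a nearby normal matrix $A_j$ with distinct nonzero eigenvalues, invoking Lemma~\ref{13} to get regularity of $\sigma_p(A_j,B)$ at the relevant point and the Riesz-projection argument of Lemma~\ref{6} to produce a near-eigenvector $x_j$, and then extracting a limit via compactness of the unit sphere in $\C^N$; the entire construction rests on the analytic-set and spectral-continuity machinery built up over Lemmas~\ref{5}, \ref{6}, and \ref{13}. Your argument instead exploits finite-dimensionality head-on: the orthogonal block decomposition afforded by normality of $A$, together with the Schur complement of the invertible off-eigenspace block, gives the explicit factorization
$$\det(I+zA+wB)=\det\bigl(M_{22}(w)\bigr)\,w^m\,\det\bigl((B_{11}-\mu I)-wB_{12}M_{22}(w)^{-1}B_{21}\bigr)$$
along the line, and the identical vanishing of the left-hand side forces $\det(B_{11}-\mu I)=0$ by letting $w\to 0$ (resp.\ $z\to\infty$ when $\lambda=0$). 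This is cleaner and entirely elementary, requiring no perturbation, no Riesz projections, no appeal to the structure of analytic sets, and no extraction of a convergent subsequence; it also treats arbitrary multiplicity uniformly in one stroke rather than reducing to the simple case. The one thing you give up relative to the paper's technique is that the Schur-complement argument is intrinsically tied to determinants and hence to finite dimensions, whereas the Riesz-projection approach of Lemmas~\ref{5}--\ref{6} extends to compact operators on infinite-dimensional $H$ — but since Lemma~\ref{14} is stated only for $N\times N$ matrices, nothing is lost here. One small point worth making explicit in a write-up: the vanishing of $\det\bigl((B_{11}-\mu I)-wB_{12}M_{22}(w)^{-1}B_{21}\bigr)$ is first obtained only for small $w\ne 0$ (where $\det M_{22}(w)\ne 0$ and $w^m\ne 0$), and one then passes to $w=0$ by continuity of this analytic function of $w$; you gesture at this with ``vanish identically'' but should spell out the continuity step.
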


\begin{proof}
By Lemma \ref{4} and the remark following it, $\lambda$ is an eigenvalue of $A$ and $\mu$
is an eigenvalue of $B$. Without loss of generality (see Section 2) we may assume that $A$
is a diagonal matrix with diagonal entries $\{\lambda_1,\lambda_2,\cdots,\lambda_N\}$, where
$\lambda_1=\cdots=\lambda_m=\lambda$ with $1\le m\le N$, and $\lambda_k\not=\lambda$ for
$m<k\le N$.

For any positive integer $j$ let $A_j$ denote the matrix obtained from the diagonal matrix
$A$ by modifying its diagonal entries as follows: if a number $d$ appears $L$ times in the
diagonal, then replace those $L$ occurrances of $d$ by
$$d+\frac1j,d+\frac2j,\cdots,d+\frac Lj,$$
respectively. This way, we find a sequence
$$\varepsilon_j=\{\varepsilon_{j1},\varepsilon_{j2},\cdots,\varepsilon_{jN}\},\quad
j=1,2,3,\cdots,$$
in the positive cone of $\C^N$ such that $\varepsilon_j\to0$ as $j\to\infty$ and, for all
$j$ sufficiently large, the numbers
$$\lambda_1+\varepsilon_{j1},\lambda_2+\varepsilon_{j2},\cdots,
\lambda_N+\varepsilon_{jN},$$
are distinct, nonzero, and constitute the diagonal entries of $A_j$.

Each matrix $A_j$ is normal and its eigenvalues are all simple and nonzero.
Since $\|A_j-A\|\to0$ as $j\to\infty$, the continuity of spectrum (see \cite{CM}
for example) shows that $\sigma_p(A_j,B)$ converges to $\sigma_p(A,B)$ uniformly on
compact subsets of $\C^2$. When viewed geometrically, the complex line $\lambda z+\mu w+1
=0$ is an irreducible component of $\sigma_p(A,B)$. Note that algebraically, the linear
polynomial $\lambda z+\mu w+1$ may appear multiple times in the factorization of
$\det(zA+wB+I)$. For each $j$ we can choose an irreducible component of $\sigma_p(A_j,B)$,
denoted $\Sigma_j$, in such a way that $\Sigma_j$ converges to the complex line $\lambda z+
\mu w+1=0$ in $\C^2$ uniformly on compacta. We know all the eigenvalues of $A_j$, so we
will assume that, for each $j\ge1$, the component $\Sigma_j$ passes through the point
$$P_j=\left(-\frac1{\lambda_{k_j}+\varepsilon_{jk_j}},0\right),\qquad 1\le k_j\le m.$$

Since the component $\Sigma_j$ of $\sigma_p(A_j,B)$ is the zero set of a polynomial
factor $f_j$ of $f(z,w)=\det(zA_j+wB+1)$, and the uniform convergence of $\{f_j\}$ on
compacta implies that its partial derivatives converge uniformly on compacta as well.
Also, by Lemma~\ref{13}, each point $P_j$ is a regular point on the component $\Sigma_j$.
Therefore, the tangent line of $\Sigma_j$ at $P_j$ converges to the line
$\lambda z+\mu w+1=0$. Also, $\lambda_{k_j}+\varepsilon_{jk_j}\to\lambda$ as $j\to\infty$.

The tangent line of $\Sigma_j$ at the point $P_j$ is given by
$$(z+\frac1{\lambda_{k_j}+\varepsilon_{jk_j}})\frac{\partial f}{\partial z}(P_j)
+\frac{\partial f}{\partial w}(P_j)=0.$$
If we write
$$\partial_zf=\frac{\partial f}{\partial z},\quad \partial_wf=\frac{\partial f}{\partial w},$$
then the tangent line of $\Sigma_j$ at $P_j$ becomes
$$(\lambda_{k_j}+\varepsilon_{jk_j})z+\mu_jw+1=0,$$
where
$$\mu_j=(\lambda_{k_j}+\varepsilon_{jk_j})\frac{\partial_wf(P_j)}{\partial_zf(P_j)}.$$
Since the tangent line of $\Sigma_j$ at $P_j$ converges to the complex line
$\lambda z+\mu w+1=0$ and $\lambda_{k_j}+\varepsilon_{jk_j}\to\lambda$ as $j\to\infty$, we
have $\mu_j\to\mu$ as $j\to\infty$.

Since $P_j$ is a regular point of $\sigma_p(A_j,B)$ and $\lambda_{k_j}+\varepsilon_{jk_j}$
is a simple, nonzero eigenvalue of $A_j$, it follows from Lemma~\ref{5} that there exists a
unit vector $x_j$ in $\C^N$ such that
\begin{equation}
A_jx_j=(\lambda_{k_j}+\varepsilon_{jk_j})x_j,\qquad \mu_j=\langle Bx_j,x_j\rangle.
\label{eq16}
\end{equation}
The unit sphere in $\C^N$ is compact, so we may as well assume that $\{x_j\}$ converges to
a unit vector in $\cn$ as $j\to\infty$. Letting $j\to\infty$ in (\ref{eq16}), we obtain
$Ax=\lambda x$ and $\mu=\langle Bx,x\rangle$.
\end{proof}

Recall that if $|\mu|=\|B\|$, $x$ is a unit vector in $\C^N$, and
$\mu=\langle Bx,x\rangle$, then $\mu$ is an eigenvalue of $B$ and $Bx=\mu x$.

We can now prove the main result of this section.

\begin{thm}
Suppose $\bA=(A_1,\cdots,A_n)$ is an $n$-tuple of $N\times N$ normal matrices
over the complex field. Then the following conditions are equivalent:
\begin{enumerate}
\item[(a)] $A_iA_j=A_jA_i$ for all $1\le i,j\le n$.
\item[(b)] $\sigma_p(\bA)$ is the union of finitely many complex hyperplanes in $\cn$.
\item[(c)] The characteristic polynomial of $\bA$ is completely reducible.
\end{enumerate}
\label{15}
\end{thm}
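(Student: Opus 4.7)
The plan is to dispatch the easy directions first, then focus on the substantive implication (c) $\Rightarrow$ (a).

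For (a) $\Rightarrow$ (c), a tuple of pairwise commuting normal matrices is simultaneously unitarily diagonalizable (the matrix version of Lemma \ref{10}), so in that basis $I + z_1 A_1 + \cdots + z_n A_n$ is diagonal and its determinant splits tautologically into $N$ linear factors. The direction (c) $\Rightarrow$ (b) is immediate, since the zero locus of a product of linear polynomials is a finite union of hyperplanes. For (b) $\Rightarrow$ (c), I would factor $p_\bA$ into irreducibles in the UFD $\C[z_1,\dots,z_n]$; uniqueness of the irreducible decomposition of an affine algebraic set, combined with the hypothesis that the zero set is a finite union of hyperplanes (each itself irreducible), forces every irreducible factor of $p_\bA$ to cut out a hyperplane and hence to be linear. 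The normalization $p_\bA(0) = 1$ then fixes the constant terms at one.

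For (c) $\Rightarrow$ (a), I would first reduce to the case $n = 2$. Given the factorization $p_\bA = \prod_k(1 + a_{k1}z_1 + \cdots + a_{kn}z_n)$, setting $z_l = 0$ for $l \ne i,j$ produces a complete factorization of $\det(I + z_iA_i + z_jA_j)$ into linear polynomials. Condition (a) is pairwise commutativity, so it suffices to prove the theorem for two matrices and then apply it to every pair $(A_i, A_j)$.

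For the two-matrix case I would induct on $N$. Write $p_{(A,B)}(z,w) = \prod_{k=1}^N(1+\lambda_kz+\mu_kw)$; evaluating at $z = 0$ identifies $\{\mu_k\}$ as the eigenvalue list of $B$ with multiplicities, so I can choose $k_0$ with $|\mu_{k_0}| = \|B\|$. The line $\lambda_{k_0}z + \mu_{k_0}w + 1 = 0$ lies in $\sigma_p(A,B)$, and since $A$ is normal, Lemma \ref{14} yields a unit vector $x$ with $Ax = \lambda_{k_0}x$ and $\langle Bx,x\rangle = \mu_{k_0}$. Because $B$ is normal, the equality $|\langle Bx,x\rangle| = \|B\|$ forces $x$ to be an eigenvector of $B$: expanding $x$ in an orthonormal eigenbasis of $B$ turns $\langle Bx,x\rangle$ into a convex combination of eigenvalues, and this can meet the numerical-range bound only when all contributing eigenvalues coincide. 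Hence $Bx = \mu_{k_0}x$, and by normality $A^*x = \overline{\lambda_{k_0}}\,x$ and $B^*x = \overline{\mu_{k_0}}\,x$ as well. Thus $\C x$ is reducing for both $A$ and $B$, and the compressions $A'$ and $B'$ to $(\C x)^\perp$ are normal matrices of size $N-1$ whose joint characteristic polynomial $\prod_{k\ne k_0}(1 + \lambda_k z + \mu_k w)$ is again completely reducible. The inductive hypothesis gives $A'B' = B'A'$, from which $AB = BA$ follows.

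The main obstacle is exactly the production of the common eigenvector in the inductive step. Lemma \ref{14} alone delivers only $\langle Bx,x\rangle = \mu_{k_0}$; upgrading this to $Bx = \mu_{k_0}x$ is where the normality of $B$ (and not just of $A$) is essential, and the choice of $\mu_{k_0}$ of maximal modulus is precisely what triggers the numerical-range saturation. Once the common eigenvector is secured, the reduction to size $N - 1$ proceeds cleanly because the one-dimensional span of an eigenvector of a normal operator is automatically a reducing subspace.
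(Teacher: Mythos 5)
Your proof is correct, and it takes a genuinely different route from the paper's even though the substance is shared. The paper proves the equivalence (a)\,$\Leftrightarrow$\,(b) by importing the argument of Theorems~\ref{11} and \ref{12} (with Lemma~\ref{14} in place of Lemma~\ref{9}), i.e.\ an alternating peeling of maximal-modulus eigenspaces, and then closes the triangle with the easy chain (a)\,$\Rightarrow$\,(c)\,$\Rightarrow$\,(b); the nontrivial step inside the imported argument is showing via spectral continuity and Proposition~\ref{8} that the union-of-lines hypothesis on $\sigma_p$ survives the passage to $\sigma_p(A_1,B_1)$. You instead establish (b)\,$\Rightarrow$\,(c) by a unique-factorization argument (an irreducible factor of $p_\bA$ cuts out an irreducible component of the zero set, which must be one of the given hyperplanes, hence the factor is linear up to a constant fixed by $p_\bA(0)=1$), and then prove (c)\,$\Rightarrow$\,(a) directly by specializing the factorization to each pair and inducting on $N$. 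The core mechanism --- Lemma~\ref{14} producing $Ax=\lambda_{k_0}x$, $\langle Bx,x\rangle=\mu_{k_0}$, and the choice $|\mu_{k_0}|=\|B\|$ saturating the numerical-range bound to force $Bx=\mu_{k_0}x$ --- is exactly the paper's engine (stated after Lemma~\ref{14} and inside Lemma~\ref{9}). What your version buys is that, once $\C x$ is reducing for both $A$ and $B$, the quotient polynomial $p_{A',B'}=\prod_{k\ne k_0}(1+\lambda_kz+\mu_kw)$ falls out immediately from cancellation in the UFD $\C[z,w]$, so the hypothesis propagates to the compression with no extra work; the paper, carrying condition (b) through the reduction, must re-verify the union-of-lines structure of $\sigma_p(A_1,B_1)$ by a separate continuity argument. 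Your induction on $N$ is the finite-dimensional shadow of the paper's peeling loop. Both are legitimate; yours is somewhat more self-contained for the matrix case at the mild cost of needing the (elementary) algebraic-geometry input for (b)\,$\Rightarrow$\,(c).
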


\begin{proof}
With Lemma~\ref{14} replacing Lemma~\ref{9}, the proof for the equivalence of (a) and (b)
is now the same as as the proof of Theorems~\ref{11} and \ref{12}.

If condition (a) holds, then by Lemma~\ref{10}, we may assume that each $A_k$ is diagonal
with diagonal entries $\{\lambda_{k1},\cdots,\lambda_{kN}\}$. It is then easy to see that
the characteristic polynomial of $\bA$ is given by
$$p(z_1,\cdots,z_n)=\prod_{k=1}^N(1+\lambda_{1k}z_1+\cdots+\lambda_{nk}z_n),$$
which is completely reducible. This shows that condition (a) implies (c).

Recall that a matrix is invertible if and only if its determinant is nonzero. If the
characteristic polynomial of $\bA$ is completely reducible, say
$$p(z_1,\cdots,z_n)=\prod_{k=1}^N(1+\lambda_{1k}z_1+\cdots+\lambda_{nk}z_n),$$
then its zero set consists of the complex hyperplanes
$$\lambda_{1k}z_1+\cdots+\lambda_{nk}z_n+1=0,\qquad 1\le k\le N.$$
This shows that condition (c) implies (b).
\end{proof}

\section{A normality test}

In this section we present a normality test for compact operators in terms of
the joint point spectrum.

\begin{thm}
A compact operator $A$ is normal if and only if the joint point spectrum
$\sigma_p(A,A^*)$ consists of countably many, locally finite, complex lines
in $\C^2$.
\label{16}
\end{thm}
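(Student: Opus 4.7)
My plan is to handle the two directions separately, and to reduce the reverse implication to the already-proved self-adjoint case (Theorem~\ref{11}) via the Cartesian decomposition $A=B+iC$.

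For the forward direction, I would invoke the spectral theorem for compact normal operators: if $A$ is normal, pick an orthonormal basis $\{e_k\}$ in which $A=\sum_k\lambda_k\,e_k\otimes e_k$, so that $A^*=\sum_k\overline{\lambda_k}\,e_k\otimes e_k$. Then
$$I+zA+wA^*=\sum_{k}\bigl(1+\lambda_k z+\overline{\lambda_k}w\bigr)\,e_k\otimes e_k,$$
which fails to be injective exactly when $1+\lambda_k z+\overline{\lambda_k}w=0$ for some $k$. Thus $\sigma_p(A,A^*)$ is the union of the countably many complex lines $\{\lambda_k z+\overline{\lambda_k}w+1=0\}$; local finiteness follows because $\lambda_k\to 0$ (compactness), so on any bounded subset of $\C^2$ only finitely many of these lines can pass through, once one notes that the line associated to a very small eigenvalue $\lambda_k$ lies far from the origin.

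For the converse, suppose $\sigma_p(A,A^*)$ is a countable, locally finite union of complex lines. Set $B=(A+A^*)/2$ and $C=(A-A^*)/(2i)$, so $B$ and $C$ are compact and self-adjoint with $A=B+iC$, and $A$ is normal if and only if $BC=CB$. Since
$$z_1A+z_2A^*=(z_1+z_2)B+i(z_1-z_2)C,$$
the substitution $u=z_1+z_2$, $v=i(z_1-z_2)$ is an invertible linear change of variables, so Lemma~\ref{3} immediately transports the union-of-lines structure of $\sigma_p(A,A^*)$ to the same structure for $\sigma_p(B,C)$ (the lemma sends hyperplanes to hyperplanes, and local finiteness is preserved because the change of variables is a linear homeomorphism of $\C^2$). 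Now Theorem~\ref{11}, applied to the compact self-adjoint pair $(B,C)$, yields $BC=CB$, hence $A$ is normal.

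The main obstacle here is really just checking the local finiteness claim in the forward direction, which is a small technical point rather than a conceptual difficulty; everything conceptual has been set up earlier. The forward direction could alternatively be viewed as a special case of the forward direction of Theorem~\ref{12}, since $A$ and $A^*$ are normal and commute precisely when $A$ is normal, but the spectral-theorem computation is cleaner and gives the lines explicitly. I expect no genuine new difficulty beyond assembling Lemma~\ref{3} and Theorem~\ref{11}.
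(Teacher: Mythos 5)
Your proof is correct and takes essentially the same route as the paper: the paper also reduces to Theorem~\ref{11} via the Cartesian pair $A+A^*$, $i(A-A^*)$ and Lemma~\ref{3}, running the chain of equivalences in both directions at once. Your direct spectral-theorem computation for the forward implication is a harmless variant of what the paper implicitly invokes through Theorem~\ref{11}.
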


\begin{proof}
Consider the compact operators
$$A_1=A+A^*,\qquad A_2=i(A-A^*),$$
which are clearly self-adjoint. It is easy to see that $A$ is normal if and only
if $A_1$ and $A_2$ commute. By Theorem~\ref{11}, $A_1$ and $A_2$ commute if and only
if the joint point spectrum $\sigma_p(A+A^*, i(A-A^*))$ consists of countably many,
locally finite, complex lines, which, according to Lemma~\ref{3}, is equivalent
to $\sigma_p(A,A^*)$ being the union of countably many, locally finite, complex
lines.
\end{proof}

\section{Complete commutativity}

In this section we determine when two operators $A$ and $B$ completely commute,
namely, $AB=BA$ and $AB^*=B^*A$. It is clear that $A$ and $B$ completely commute if
and only if $B$ commutes with both $A$ and $A^*$, so complete commutativity is a
symmetric relation. It is a well-known theorem of Fuglede \cite{F} that if $B$ is
normal, then $A$ commutes with $B$ if and only if $A$ commutes with $B^*$. Therefore,
for normal operators, complete commutativity is the same as commutativity.

\begin{thm}
Suppose $A$ and $B$ are both compact. Then they are normal and commute if and only
if the joint point spectrum $\sigma_p(A,A^*,B,B^*)$ is the union of countably many
complex lines in $\C^4$.
\label{17}
\end{thm}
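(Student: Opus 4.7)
The plan is to handle the two implications separately: the forward direction is a direct spectral decomposition using Lemma~\ref{10} together with Fuglede's theorem (cited at the start of this section), while the reverse direction reduces to the already-proved Theorem~\ref{12} via the linear change of variables in Lemma~\ref{3}.

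For the forward direction, I begin with the assumption that $A$ and $B$ are compact and normal with $AB=BA$. Fuglede's theorem applied with $B$ normal gives $AB^*=B^*A$, and taking adjoints throughout yields that the four operators $A,A^*,B,B^*$ pairwise commute. Lemma~\ref{10} then produces an orthonormal basis $\{e_n\}$ of $H$ that simultaneously diagonalizes $A$ and $B$, say $Ae_n=\lambda_n e_n$ and $Be_n=\mu_n e_n$; in the same basis $A^*e_n=\overline{\lambda_n}e_n$ and $B^*e_n=\overline{\mu_n}e_n$. A direct expansion then gives
$$I+z_1A+z_2A^*+z_3B+z_4B^*=\sum_n\bigl(1+\lambda_n z_1+\overline{\lambda_n}z_2+\mu_n z_3+\overline{\mu_n}z_4\bigr)\,e_n\otimes e_n,$$
so $\sigma_p(A,A^*,B,B^*)$ is precisely the union over $n$ of the hyperplanes where the $n$-th diagonal entry vanishes, and $\lambda_n,\mu_n\to 0$ forces local finiteness.

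For the reverse direction, I introduce the four compact self-adjoint operators
$$T_1=A+A^*,\quad T_2=i(A-A^*),\quad T_3=B+B^*,\quad T_4=i(B-B^*),$$
which arise from $(A,A^*,B,B^*)$ by an invertible block-diagonal $4\times 4$ linear change of variables (two copies of $\left(\begin{smallmatrix}1&1\\i&-i\end{smallmatrix}\right)$ on the diagonal). Lemma~\ref{3} then transports the union-of-hyperplanes structure, together with local finiteness, from $\sigma_p(A,A^*,B,B^*)$ to $\sigma_p(T_1,T_2,T_3,T_4)$. Since the $T_j$ are compact and self-adjoint, Theorem~\ref{12} applies and shows that $T_1,T_2,T_3,T_4$ pairwise commute.

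It remains to translate these six commutator identities back into conclusions about $A$ and $B$. The relation $[T_1,T_2]=0$ reduces immediately to $AA^*=A^*A$, so $A$ is normal; likewise $[T_3,T_4]=0$ gives normality of $B$. The four cross commutators $[T_i,T_j]=0$ for $i\in\{1,2\}$, $j\in\{3,4\}$ expand as linear combinations of the eight products $AB,AB^*,A^*B,A^*B^*$ and their reverses, and summing all four cancels every term except $4AB$ on the left and $4BA$ on the right, yielding $AB=BA$. I expect this final sign-tracking to be the only fiddly step; no new ideas beyond the tools already developed are required.
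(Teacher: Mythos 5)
Your proof is correct and uses essentially the same approach as the paper: introduce the four self-adjoint operators $A\pm A^*$ and $i(A-A^*)$, $i(B-B^*)$, transport the hyperplane structure via Lemma~\ref{3}, invoke Theorem~\ref{12} (the paper cites Theorem~\ref{11}, but it relies on the $n$-tuple version), and decode the resulting commutator identities. The only cosmetic difference is that you prove the forward direction by explicit simultaneous diagonalization via Lemma~\ref{10} rather than by running Theorem~\ref{12} as an equivalence, and the Fuglede step there is unnecessary once you have Lemma~\ref{10} in hand.
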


\begin{proof}
Consider the self-ajoint operators
$$A_1=A+A^*,\quad A_2=i(A-A^*),\quad A_3=B+B^*,\quad A_4=i(B-B^*).$$
It is easy to check that the normal operators $A$ and $B$ completely commute if and only
if the operators in $\{A_1,A_2,A_3,A_4\}$ pairwise commute, which, by Theorem~\ref{11}, is
equivalent to $\sigma_p(A_1,A_2,A_3,A_4)$ being the union of countably many, locally finite,
complex hyperplanes. This, according to Lemma~\ref{3}, is equivalent to $\sigma_p(A,A^*,B,B^*)$
being the union of countably many, locally finite, complex hyperplanes.
\end{proof}

\begin{thm}
Suppose that $A$ and $B$ are both compact. Then they commute completely if and only if
each of the four joint spectra $\sigma_p(A\pm A^*,B\pm B^*)$ is the union of countably
many, locally finite, complex lines in $\C^2$.
\label{18}
\end{thm}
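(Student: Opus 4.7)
The plan is to reduce complete commutativity of $A$ and $B$ to four separate pairwise commutation statements for self-adjoint operators, and then invoke Theorem~\ref{11} for each pair. Set
$$S_1=A+A^*,\quad S_2=i(A-A^*),\quad T_1=B+B^*,\quad T_2=i(B-B^*),$$
all of which are compact and self-adjoint. Complete commutativity of $A$ and $B$ is equivalent to each of $A,A^*$ commuting with each of $B,B^*$; by taking sums and differences, this in turn is equivalent to the four commutation identities $S_j T_k=T_k S_j$ for $j,k\in\{1,2\}$. I will establish the theorem by matching each of these four commutation identities with one of the four hypotheses $\sigma_p(A\pm A^*,B\pm B^*)$ being a union of countably many locally finite complex lines.

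The first step is to observe that multiplying a coordinate by $i$ is an invertible linear change of variables, so by Lemma~\ref{3} the set $\sigma_p(A-A^*,B\pm B^*)$ is a locally finite union of complex lines in $\C^2$ if and only if $\sigma_p(i(A-A^*),B\pm B^*)$ is, and similarly for the second coordinate. Thus the four hypotheses are equivalent, respectively, to each of the four sets
$$\sigma_p(S_1,T_1),\quad \sigma_p(S_1,T_2),\quad \sigma_p(S_2,T_1),\quad \sigma_p(S_2,T_2)$$
being the union of countably many, locally finite, complex lines. Since each $S_j,T_k$ is compact and self-adjoint, Theorem~\ref{11} applies and says that each such condition is equivalent to the corresponding commutation identity $S_jT_k=T_kS_j$.

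It remains to check that the conjunction of the four identities $S_jT_k=T_kS_j$ is equivalent to complete commutativity of $A$ and $B$. If the four identities hold, then $S_1\pm S_2$ commutes with $T_1\pm T_2$; computing $S_1+S_2 = A+A^* + iA - iA^*$ and similar combinations (or more simply, solving the linear system) recovers that each of $A,A^*$ commutes with each of $B,B^*$. Conversely, if $A$ commutes with both $B$ and $B^*$, then taking adjoints shows $A^*$ commutes with $B^*$ and $B$, and so $S_j=\alpha A+\beta A^*$ commutes with $T_k=\gamma B+\delta B^*$ for all the relevant scalar coefficients.

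The entire argument is essentially bookkeeping once the hard work of Theorem~\ref{11} is in hand, so I do not anticipate a real obstacle; the only point requiring care is the sign/factor-of-$i$ bookkeeping to invoke Lemma~\ref{3} correctly on each of the four sets, which is routine. A clean way to present this is to state the argument as a corollary of Theorem~\ref{11} together with the elementary linear-algebra fact that commutativity of $\{A,A^*\}$ with $\{B,B^*\}$ is equivalent to commutativity of their real and imaginary parts $S_1,S_2$ with $T_1,T_2$.
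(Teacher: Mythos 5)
Your argument is correct and follows essentially the same route as the paper's: both reduce to the four self-adjoint commutation relations via Lemma~\ref{3} (to absorb the factors of $i$) and Theorem~\ref{11}, and then untangle the four identities by linear algebra to recover that $A$ and $A^*$ commute with $B$ and $B^*$. Your explicit introduction of $S_1,S_2,T_1,T_2$ as the real and imaginary parts is a slightly cleaner packaging of the same bookkeeping that the paper performs by adding and subtracting its displayed equations (\ref{eq17})--(\ref{eq20}).
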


\begin{proof}
If each of the four sets $\sigma_p(A\pm A^*,B\pm B^*)$ is the union of countably many, locally
finite, complex lines, then by Lemma~\ref{3} and Theorem~\ref{11}, all four pairs of operators
commute. Thus, we have
\begin{eqnarray}
AB+AB^*+A^*B+A^*B^*=BA+B^*A+BA^*+B^*A^* \label{eq17} \\
AB+A^*B-AB^*-A^*B^*=BA-B^*A+BA^*-B^*A^* \label{eq18} \\
AB+AB^*-A^*B-A^*B^*=BA+B^*A-BA^*-B^*A^* \label{eq19} \\
AB-A^*B-AB^*+A^*B^*=BA-B^*A-BA^*+B^*A^* \label{eq20}
\end{eqnarray}
Adding equations (\ref{eq17})-(\ref{eq20}) gives $AB=BA$.
Adding (\ref{eq17}) and (\ref{eq18}), we obtain $AB+A^*B=BA+BA^*$.
Thus, $A$ and $B$ commute completely.

The opposite direction follows from Theorem \ref{11} and Lemma~\ref{3} as well.
\end{proof}

\section{Further remarks and extensions}

We conjecture that Corollary D stated in the introduction can be strengthened as follows: If
$\bA=(A_1,\cdots,A_n)$ is a tuple of compact and normal operators, then the operators in
$\bA$ pairwise commute if and only if $\sigma_p(\bA)$ is the union of countably many, locally
finite, complex hyperplanes in $\cn$. Note that we have already shown this for matrices. But
the proof for matrices depends on the the determinant function and the compactness of
the unit sphere in $\C^N$. The determinant function can be extended to operators of the form
$z_1A_1+\cdots+z_nA_n+I$, where each $A_k$ is in the trace class. However, the unit sphere in
an infinite dimensional Hilbert space is only compact in the weak topology, and this does not
appear enough for our purposes. 

Our focus here is on the linear structure in the joint point spectrum $\sigma_p(\bA)$ of a
tuple of compact operators. Some of our ideas and techniques can be applied to certain other
situations. For example, some of our results hold for certain operators with discrete spectrum,
although the case of continuous spectrum seems to be completely different. Also, we have
obtained some partial results about the commutativity of operators based on certain nonlinear
geometric properties of $\sigma_p(\bA)$. We will discuss several related problems and results
in subsequent papers, and we hope that this paper will serve as a catalyst for further research
in this field.

In \cite{R1} Ricker proved a beautiful theorem stating that an \(n\)-tuple \(\bA=(A_1,\cdots,A_k)\)
of self-adjoint matrices is mutually commuting if and only if the following matrix-valued distribution
\begin{equation}\label{Weyl}
{\mathcal T}_{\bA}f=\left ( \frac{1}{2\pi}\right ) ^{n/2}\int_{\R^n}
e^{i\langle w,A\rangle}\hat{f}(w)dw, \ f\in S(\R^n),
\end{equation}
has order zero. Here, as usual, \(S(\R^n)\) stands for the Schwartz space of complex-valued, rapidly
decreasing functions on \(\R^n\), and \(\hat{f}\) is the Fourier transform of \(f\). Ricker further
posted the problem of whether a similar result holds for an \(n\)-tuple of self-adjoint operators
acting on a Hilbert space \(H\), and commented that the technique in \cite{R1} was purely
finite-dimensional. This problem seems to be still open.

Our Theorem \ref{15} also deals with commutativity of an \(n\)-tuple of matrices (from a slightly
wider class of normal matrices). Our technique is essentially infinite-dimensional. It would be
interesting to find out whether there is a connection between the geometry of the projective
joint spectrum of an \(n\)-tuple of compact self-adjoint operators and the order of
distribution in (\ref{Weyl}). In particular, we wonder if it is possible to tackle Ricker's problem
for compact operators from this angle.

Finally, we use $2\times2$ matrices to demonstrate that the normality assumption in
Theorem~\ref{15} is necessary. In fact, if we take
$$A=\begin{pmatrix} 1&0\cr 0&2\end{pmatrix},\qquad
B=\begin{pmatrix} 3&0\cr 4&5\end{pmatrix}.$$
Then
$$\det(I+zA+wB)=(1+z+3w)(1+2z+5w)$$
is completely reducible. But these two matrices do not commute.

\end{document}